\newtheorem{theorem} {Theorem}[section]
\newtheorem{corollary}[theorem]{Corollary}
\newtheorem{lemma}[theorem]{Lemma}
\newtheorem{proposition}[theorem]{Proposition}
\newtheorem{question}[theorem]{Question}
\theoremstyle{definition}
\newtheorem{example}[theorem]{Example}
\newtheorem{definition}[theorem]{Definition}
\theoremstyle{remark}
\newtheorem{remark}[theorem]{Remark}
\newcommand{\conv}  {\operatorname{conv} }
\newcommand{\vol}  {\operatorname{vol} }
\newcommand{\Z}{\mathbb{Z}}
\newcommand{\R}{\mathbb{R}}
\newcommand{\N}{\mathbb{N}}
\newcommand{\size}{\text{size}}
\newcommand{\width}{\text{width}}
\newcommand{\lcm}{\operatorname{lcm}}
\newcommand{\ver}{\operatorname{vert}}
\newcommand{\dist}{\operatorname{dist}}
\begin{document}

\title{The finiteness threshold width of lattice polytopes}


\author[M. Blanco]{M\'onica Blanco}
\author[C. Haase]{Christian Haase}
\author[J. Hofmann]{Jan Hofmann}
\author[F. Santos]{Francisco Santos}

\address
[F.~Santos and M.~Blanco]
{
Departamento de Matem\'aticas, Estad\'istica y Computaci\'on,
Universidad de Cantabria,
39005 Santander, Spain
}
\email{francisco.santos@unican.es, m.blanco.math@gmail.com}

\address[C.~Haase and J.~Hofmann]{%
  Mathematik \\
  FU Berlin \\
  14195 Berlin \\
  Germany}
\email{haase@math.fu-berlin.de, math@hofmann-jan.de}

\thanks{ Supported by: grants MTM2014-54207-P, MTM2017-83750-P (M.~Blanco
  and F.~Santos), and BES-2012-058920 (M.~Blanco) of the Spanish
  Ministry of Science; the Einstein Foundation Berlin under grant EVF-2015-230 (F.~Santos); the Berlin Mathematical School (J.~Hofmann).\\
  \null \ \\}

\subjclass[2020]{52B20, 52B10, 52B11}
\date{}


\begin{abstract}
In each dimension $d$ there is a constant $w^\infty(d)\in \N$
such that for every $n\in \N$ all but finitely many lattice $d$-polytopes with
$n$ lattice points have lattice width at most $w^\infty(d)$. We
call $w^\infty(d)$ the \emph{finiteness threshold width} in dimension $d$ and show that $d-2
\le w^\infty(d)\le O^*\left( d^{4/3}\right)$.

Blanco and Santos determined the value $w^\infty(3)=1$. Here, we
establish $w^\infty(4)=2$. This implies, in particular, that there are only finitely many
empty $4$-simplices of width larger than two. (This last statement was claimed by Barile et al. in \emph{Proc. Am. Math. Soc.} (2011), but we have found a gap in their proof).

Our main tool is the study of $d$-dimensional lifts of hollow $(d-1)$-polytopes.

\end{abstract}

\maketitle


\section{Introduction}

\emph{Lattice polytopes} are convex polytopes with vertices in
$\Z^d$ (or in any other lattice). They appear in combinatorics, algebraic geometry, symplectic geometry, optimization, or
statistics and have applications to mathematical physics in string theory.
In particular, 
considerable effort has gone into several classification projects for
several classes of them, with motivation stemming from different
sources. For example:
\begin{itemize}
\item A monumental task and now a shining example is the
classification of reflexive polytopes up to dimension $4$ by Kreuzer 
and Skarke~\cite{KS00}, the data for these and other Calabi-Yau manifolds
can be found online under \url{http://hep.itp.tuwien.ac.at/~kreuzer/CY.html}.

\item Smooth reflexive polytopes were classified up to dimension $8$ by 
{{\O}}bro~\cite{Ob07} and in dimension $9$ by Lorenz and Paffenholz~\cite{LoPa} (see also \url{https://polymake.org/polytopes/paffenholz/www/fano.html}). This classification 
led to new discoveries about smooth reflexive
polytopes in arbitrary dimension and hereby helped solving long-open problems~\cite{AJP14,LoNi15,NP11,OSY12}. 

\item Lattice polytopes with a single lattice point in their interior
(assumed to be the origin) are important in algebraic geometry. They
correspond to projective toric varieties with at most \emph{canonical
 singularities}, which is why they are called \emph{canonical
 polytopes}. Canonical polytopes all of whose boundary lattice points
are vertices are called \emph{terminal}. Canonical $3$-dimensional
lattice polytopes were fully enumerated by Kasprzyk~\cite{Kas08}. The
data for this and a lot more can be found in the graded ring database
(\url{http://www.grdb.co.uk}).

\item A classification especially useful for us is that of 
\emph{hollow} polytopes, by which we mean lattice polytopes
without interior lattice points. In dimension two their list consists of
the polygons of width one plus the second dilation of a unimodular triangle. 
In dimension three the full classification has recently been completed
by Averkov et al. \cite{AverkovWagnerWeismantel}
and~\cite{AKW15}. See Section~\ref{sec:dim4} for details.

\item We call \emph{empty} a (necessarily hollow) lattice polytope with no lattice point apart from its vertices.
Empty simplices are of special interest, since they are the building blocks into which every lattice polytope can be decomposed, and since they correspond to terminal quotient singularities in algebraic geometry. 
Their classification in dimension three is by now classical~\cite{White}. Their classification in dimension four has been completed recently~\cite{IglesiasSantos2}, after
efforts coming both from algebraic geometry~\cite{BarileBernardiBorisovKantor,Bober09,MMM88,Sankaran90}
and discrete geometry~\cite{HaaseZiegler}. See Remark~\ref{rem:BBBK} for more details.
\end{itemize}

All these classifications are modulo \emph{unimodular equivalence}, sometimes called $\Z$-isomorphism. We say that two lattice polytopes are \emph{unimodularly equivalent}, or just \emph{equivalent}, if there is
a lattice-preserving affine isomorphism mapping them onto each other.
\medskip

From the point of view of discrete geometry alone, it seems natural to
classify, or enumerate, \emph{all} lattice polytopes of a given
dimension and with a certain number of lattice points. We call the
latter the \emph{size} of a lattice polytope. 
In dimension $1$ this is trivial, since the unique lattice segment of size $n$ is that of length $n-1$. In dimension $2$ it is also easy, at least from the theoretical point of view: Pick's Theorem implies that lattice polygons of size $n$ have area smaller than $n$, which in turn implies that they can be unimodularly transformed to fit in $[0,2n]^2$. Hence, there are finitely many of them and they can in principle be enumerated by brute force.
%
However, in dimension $3$ and higher the task is a-priori undoable, since the number is infinite already for the smallest possible case, that of \emph{empty tetrahedra} (that is, lattice $3$-polytopes of size $4$). Indeed,
the following infinite family of so-called \emph{Reeve tetrahedra} was
described more than 60 years ago~\cite{Re57}:
\[
T_r:=\conv\left\{
\begin{pmatrix}0\\0\\0\\ \end{pmatrix},
\begin{pmatrix}1\\0\\0\\ \end{pmatrix},
\begin{pmatrix}0\\1\\0\\ \end{pmatrix},
\begin{pmatrix}1\\1\\r\\ \end{pmatrix}
\right\}.
\]

Still, Blanco and Santos~\cite{5points} found a way of making sense of
the question in dimension $3$. They proved that, for each $n$, all
but finitely many lattice $3$-polytopes of size $n$ have width one. They also
enumerated lattice polytopes of width larger than one and of sizes up
to eleven~\cite{5points,6points,quasiminimals}. 

Here, the width of a lattice polytope $P$ \emph{with respect to
 a linear functional} $\ell \in (\R^{d})^*$ is defined as
\[
\width_{\ell}(P) := \max_{ p,q\in P} \left| \ell\cdot p- \ell\cdot q
\right| \,,
\]
and the \emph{lattice width}, or simply \emph{width}, of the polytope $P$ is the minimum such
$\width_\ell(P)$ where $\ell$ ranges over non-zero integer
functionals:
\[
\width(P):=\min_{\ell \in(\Z^{d})^* \setminus \{0\}} \width_{\ell}(P).
\] 
For example, $P$ has width one if and only if it lies between two
consecutive lattice hyperplanes.
\medskip

The starting point in this paper is the observation that the finiteness 
result of Blanco and Santos generalizes as follows:

\begin{theorem}
\label{thm:ftw}
For each dimension $d$ there is a constant $w\in \N$ such that
for every $n\in \N$ the number of lattice $d$-polytopes of size $n$ and width larger
than $w$ is finite.
Moreover, the minimal such constant $w^\infty(d)$ satisfies
\begin{equation}
\label{eqn:w_E}
d-2 \quad \le \quad w^\infty(d) \quad \le \quad  O^*\big( d^{4/3}\big),
\end{equation}
where the notation $O^*$ means that a polylogarithmic factor is omitted.
\end{theorem}

\begin{proof}
Corollary~\ref{coro:infinite_bipyramid}  (see  Theorem~\ref{thm:summaryofbounds}(5)) states that $w^\infty(d) \ge w_H(d-2)$, where $w_H(s)$ denotes the maximum lattice width of hollow lattice $s$-polytopes. Since the $s$-th dilation of a unimodular $s$-simplex is hollow, one has $w_H(s) \ge s$, hence \[w^\infty(d) \ge w_H(d-2) \ge d-2.\]

For the upper bound, first observe that hollow polytopes of dimension $d$ have a global bound of $O^*(d^{4/3})$ for their width~\cite{Banaszczyk_etal, Rudelson2000}. 
That is, $w_H(d)\le O^*(d^{4/3})$. It hence suffices to show that $w^\infty(d) \le w_H(d)$. For this we use:

{\bf Claim:} \emph{For each $d$ and $n$ there are only finitely many non-hollow lattice $d$-polytopes of size $n$}. This follows from the combination of the following two results: 
Hensley~\cite[Thm.~3.6]{Hensley} showed that there is a bound on the
volume of non-hollow lattice $d$-polytopes with a given number $k$ of interior points; taking the maximum of these 
bounds for $k\in\{1,\dots,n-d-1\}$ provides a bound for the volume of non-hollow $d$-polytopes of size $n$. 
Lagarias and Ziegler~\cite[Thm.~2]{LagariasZiegler} proved finiteness of
the number of equivalence classes of lattice $d$-polytopes with a
bound on their volume.

Since $d$-polytopes of width $>w_H(d)$ are necessarily non-hollow, the claim implies that for each $n$ there are finitely many $d$-polytopes of size $n$ and of width $>w_H(d)$; that is, $w^\infty(d) \le w_H(d)$.
(In Theorem~\ref{thm:summaryofbounds}(3) below, we tighten this slightly to $w^\infty(d) \le w_{H}(d-1)$).
\end{proof}

\begin{definition}
For each $d\in \N$ we call \emph{finiteness threshold width} in dimension $d$ the minimum constant $w^\infty(d)\in \N$ such that for every $n\in \N$ the number of lattice $d$-polytopes of size $n$ and width larger than $w^\infty(d)$ is finite.
\end{definition}

For instance, $w^\infty(1)=w^\infty(2)=0$ since, as said above, there are only finitely many lattice $d$-polytopes of each size. Blanco and Santos'
aforementioned result states that $w^\infty(3)=1$. Our main result is the exact value of 
$w^\infty(4)$:

\begin{theorem}[Corollary~\ref{coro:main}]
\label{coro:main-intro}
$w^\infty(4)=2$. That is, for each $n \ge 5$ there are only finitely
many lattice $4$-polytopes of size $n$ and width greater than $2$.
\end{theorem}

This implies the following result:

\begin{corollary}
\label{coro:BBBK}
There are infinitely many empty $4$-simplices of width two but only finitely many
of larger width.
\end{corollary}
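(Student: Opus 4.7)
The plan is to derive Corollary \ref{coro:BBBK} as an immediate consequence of Theorem \ref{thm:main}, applied with $n=5$. The only thing to verify is that empty $4$-simplices sit inside the class that the theorem controls, namely lattice $4$-polytopes of size $n=5$.

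First I would observe that the set of empty $4$-simplices coincides with the set of lattice $4$-polytopes of size~$5$. On one hand, an empty $4$-simplex is by definition a lattice $4$-simplex whose only lattice points are its five vertices, so it has size exactly~$5$. Conversely, any lattice $4$-polytope $P$ has at least $\dim P+1 = 5$ vertices, and each vertex is a lattice point; therefore a lattice $4$-polytope of size~$5$ has exactly five vertices, all of them lattice points, and no further lattice points at all. Such a $P$ is precisely an empty $4$-simplex.

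Next, I would plug $n=5$ into the definition of $w^\infty(4,5)$ and combine it with Theorem~\ref{thm:main}. By definition of the supremum, $w^\infty(4,5) \le w^\infty(4) = 2$, so the minimal $W$ for which only finitely many lattice $4$-polytopes of size $5$ have width $>W$ is at most~$2$. In particular, there are only finitely many lattice $4$-polytopes of size $5$ with width strictly greater than~$2$. Together with the identification from the previous paragraph, this yields the corollary.

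Since the content of the corollary is essentially a translation, there is no real obstacle at this step: all the difficulty is concentrated in proving Theorem~\ref{thm:main}. The only minor subtlety worth flagging is the convention, recalled earlier in the paper, that finiteness is understood modulo unimodular equivalence; the identification between empty $4$-simplices and size-$5$ lattice $4$-polytopes respects this equivalence, so no issue arises.
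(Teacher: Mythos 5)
Your proposal is correct and is exactly the paper's (implicit) argument: the corollary is stated as an immediate observation from Theorem~\ref{thm:main} with $n=5$, using the identification of empty $4$-simplices with lattice $4$-polytopes of size $5$. Nothing further is needed.
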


\begin{proof}
Haase and Ziegler~\cite[Proposition 6]{HaaseZiegler} 
found infinitely many empty $4$-simplices of width~$2$.
$w^\infty(4)=2$ implies there are only finitely many of larger width.
\end{proof}

\begin{remark}
\label{rem:BBBK}
The second part of Corollary~\ref{coro:BBBK} is the main result in Barile et
al.~\cite{BarileBernardiBorisovKantor}, but we have found out that
the proof given in that paper is incomplete.
Indeed, the authors use a classification of infinite families of empty
$4$-simplices of width $>1$ that had been conjectured 
to be complete 
by Mori et al.~\cite{MMM88} and proved by Sankaran~\cite{Sankaran90}
and Bover~\cite{Bober09}, \emph{
for simplices whose determinant (i.e., their normalized volume)
is a prime number}. 
But when the determinant is not prime other infinite families do arise, such as the
following explicit example: the empty $4$-simplices with vertices $e_1$,
$e_2$, $e_3$, $e_4$ and $(2, N/2-1, a, N/2-a)$, where the determinant $N$ is a
multiple of $4$ and coprime with $a$. 
As a conclusion,
the proof of Corollary~\ref{coro:BBBK} given in~\cite{BarileBernardiBorisovKantor}
is valid only for simplices of prime determinant.

We thank O.~Iglesias for the computations
leading to finding this (and other) families and we thank the authors of~\cite{BarileBernardiBorisovKantor}
for acknowledging (private communication) their mistake and for
helpful discussions about the extent of it.

After the present paper was completed, a new proof of Corollary~\ref{coro:BBBK} has been obtained by Iglesias and Santos, which gives the following more explicit information: there are exactly $179$ empty $4$-simplices of width larger than two, all of width three except for one of width four~\cite{IglesiasSantos}. Furthermore,~\cite{IglesiasSantos2} contains the full classification of empty $4$-simplices, including the additional infinite families of width two that arise for nonprime determinant.
\end{remark}

Our bounds on $w^\infty(d)$ come from relating it to the maximum widths of hollow and/or empty $d$-polytopes.
As already mentioned, a lattice polytope is \emph{hollow} if there is no lattice
point in its interior and \emph{empty} if its vertices are the only lattice
points it contains.

\begin{definition}
\label{defi:w_H}
We denote $w_H(d)$ and $w_E(d)$ the maximum widths
of hollow and empty $d$-polytopes, respectively.
\end{definition}

Finiteness of $w_H(d)$ (and hence of $w_E(d)$) is usually called the ``flatness theorem'', dating back to Khinchine (1948); see, e.g.,~\cite{KL88}. The current best upper bound of $w_H(d) \le O(d^{4/3} \log^ad)$ for some constant $a$ (used in the proof of Theorem~\ref{thm:ftw}) is by Rudelson~\cite{Rudelson2000}, building on work by Banaszczyk et.~al~\cite{Banaszczyk_etal}.
As for lower bounds, $w_H(d) \ge d$ follows from hollowness of the $d$-th dilation of a unimodular $d$-simplex, while 
$w_E(d) \ge 2\lfloor{d/2}\rfloor-1$ was proved by Seb{\H{o}}~\cite{Sebo} by slightly modifying this same dilated $d$-simplex to make it empty. 
\smallskip

Along the paper, we prove the following properties and bounds of $w^\infty(d,n)$ and $w^\infty(d)$,
where $w^\infty(d,n)$ is the stratification of the threshold width in terms of size. That is,
$w^\infty(d,n)\in \N \cup \{\infty\}$ is the minimal width $W\ge0$ such that there exist only
finitely many lattice $d$-polytopes of size $n$ and width $>W$. Clearly, $w^\infty(d) = \max_{n\in \N} \; w^\infty(d,n)$ and, in particular, each $w^\infty(d,n)$ is finite.
\smallskip

\begin{theorem}
\label{thm:summaryofbounds}\ 
\begin{enumerate}
\item $w^\infty(d,n)\le w^\infty(d,n+1)$ for all $d,n$. \hfill
  (Proposition~\ref{proposition:monotone_n})
\item $w^\infty(d)\le w^\infty(d+1)$ for all $d$. \hfill
  (Proposition~\ref{proposition:monotone_d})
\item $w^\infty(d)\le w_H(d-1)$. \hfill
  (Lemma~\ref{lemma:finite-nonprojecting})
\item $w_E(d-1) \le w^\infty(d)$ for $d\ge 3$. \hfill
  (Corollary~\ref{coro:infinite_nonsimplex})
\item $w_H(d-2)\le w^\infty(d)$. \hfill
  (Corollary~\ref{coro:infinite_bipyramid})
\end{enumerate}
\end{theorem}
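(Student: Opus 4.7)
I would split the proof into a direct argument for the monotonicity assertions (1) and (2), a projection-plus-compactness argument for the upper bound (3), and explicit lifting constructions for the two lower bounds (4) and (5).

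For item (2), given an infinite family $\{P_k\}$ of $d$-polytopes of size $n$ and width exceeding $W$, the plan is to form the prism $P_k' := P_k \times [0, W+1] \subset \R^{d+1}$. The additivity of widths under Cartesian products yields $\text{width}(P_k') = \min(\text{width}(P_k), W+1) \ge W+1 > W$, and each $P_k'$ has size $n(W+2)$ and is uniquely determined by $P_k$ (recover $P_k$ as the slice $\{x_{d+1}=0\}$). Thus $\{P_k'\}$ is an infinite family of $(d+1)$-polytopes of fixed size and width $>W$. For item (1), since $\conv(P \cup \{v\}) \supseteq P$ always has width at least that of $P$, it suffices to show that every $d$-polytope admits an extension by exactly one lattice point. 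The plan is to pick a facet $F$ of $P_k$ with supporting lattice hyperplane $H$ and a lattice point $v$ on the next outer lattice hyperplane $H'$ such that $\conv(P_k \cup \{v\}) \cap H = F$; then the extension adds only $v$, because no lattice height lies strictly between $H$ and $H'$ and $\conv(P_k \cup \{v\}) \cap H' = \{v\}$. Concretely, I would take $v = v_0 + e$ with $v_0 \in F \cap \Z^d$ a vertex and $e$ a primitive lattice vector satisfying $\ell_F \cdot e = 1$, and verify by a short case analysis that, for some choice of $(F, v_0, e)$, the ``shadow'' of $v$ through $P_k$ onto $H$ stays inside $F$.

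For the upper bound (3), the strategy is projection combined with compactness. Assume infinitely many $d$-polytopes $P_k$ of size $n$ and width $>w_H(d-1)$ exist. For each $P_k$, pick a width-minimizing integer direction $\ell_k$ and let $Q_k = \pi_{\ell_k}(P_k)$ be the resulting lattice $(d-1)$-polytope. Since $\text{width}(Q_k) \ge \text{width}(P_k) > w_H(d-1)$, the polytope $Q_k$ cannot be hollow, hence has an interior lattice point. The core technical step is to use this, combined with the fact that lattice points of $P_k$ lie in fibers of length at most $\text{width}(P_k)$ over lattice points of $Q_k$, to deduce a lower bound on $|P_k \cap \Z^d|$ that grows with $\text{width}(P_k)$. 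For fixed $n$ this forces the width to stay bounded; a subsequence/normalization argument then reduces to finitely many projections $Q$ and, for each, to finitely many lifts. I expect converting the qualitative statement ``projection is non-hollow'' into this quantitative bound to be the main obstacle, and it is precisely where the paper's study of $d$-dimensional lifts of hollow $(d-1)$-polytopes (mentioned in the abstract) enters.

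For the two lower bounds I would use width-preserving lifts. For item (4), given an infinite family of empty $(d-1)$-simplices $S_k$ of width $>W$ --- each with exactly $d$ lattice points (its vertices) --- the prism $S_k \times [0, W+1]$ is a $d$-polytope of width $\ge W+1$ and fixed size $d(W+2)$, by the same calculation as in (2), and distinct $S_k$ yield distinct prisms. For item (5), given an infinite family of hollow $(d-2)$-polytopes $Q_k$ of width $>W$, I would use a lift that joins $Q_k$ with a 2D cross-polytope in two extra dimensions. After translating so that $\mathbf{0} \in Q_k$, embed $Q_k$ into $\R^{d-2} \times \{(0,0)\} \subset \R^d$ and define
\[
P_k := \conv\bigl((Q_k \times \{(0,0)\}) \cup \{(\mathbf{0}, \pm M, 0),\ (\mathbf{0}, 0, \pm M)\}\bigr)
\]
with $M = \lceil (W+1)/2 \rceil$. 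A direction-by-direction analysis shows $\text{width}(P_k) \ge \min(\text{width}(Q_k), 2M) > W$. Generically the lattice points of $P_k$ are $(Q_k \cap \Z^{d-2}) \times \{(0,0)\}$ together with the $\ell^1$-ball $\{(\mathbf{0}, j, k) : |j|+|k| \le M\}$, giving total size $|Q_k \cap \Z^{d-2}| + 2M^2 + 2M$. Restricting to a subfamily with constant $|Q_k \cap \Z^{d-2}|$ --- possible since hollow polytopes of width $>W$ have uniformly bounded size --- then yields the desired infinite family of $d$-polytopes of fixed size and width $>W$.
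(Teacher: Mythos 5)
Parts (1) and (2) of your plan are essentially sound, although for (1) the unproven claim that some choice of $(F,v_0,e)$ makes the ``shadow'' of $v$ land inside $F$ is not obvious and not needed: the paper simply adjoins \emph{any} lattice point of the width-$W$ slab and then deletes surplus lattice points one vertex at a time until exactly one remains, and then observes that the resulting map on equivalence classes is at most $(n+1)$-to-one (a finiteness check you omit). The serious gaps are in (3), (4) and (5). For (3), the step you yourself flag as the main obstacle is not just missing but unobtainable as stated: there is no lower bound on $|P\cap\Z^d|$ growing with $\width(P)$, because empty $d$-simplices, with only $d+1$ lattice points, can have width as large as $w_E(d)\ge 2\lfloor d/2\rfloor-1$. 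Moreover the size of the projection $Q_k$ is not controlled by the size of $P_k$ (lattice points of $Q_k$ need not lift to lattice points of $P_k$), so non-hollowness of $Q_k$ cannot be fed into Hensley's volume bound. The paper instead argues on $P_k$ itself: Hensley and Lagarias--Ziegler show that all but finitely many $d$-polytopes of size $\le n$ are hollow, and then the Nill--Ziegler theorem --- all but finitely many hollow $d$-polytopes project onto a hollow $(d-1)$-polytope --- which is entirely absent from your sketch, forces $\width(P_k)\le\width(Q_k)\le w_H(d-1)$ for all but finitely many $P_k$.

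For (4) and (5) your arguments start from ``an infinite family of empty $(d-1)$-simplices of width $>W$'', respectively ``an infinite family of hollow $(d-2)$-polytopes of width $>W$''. Such families need not exist: $w_E(d-1)$ and $w_H(d-2)$ are maxima that may be attained by only finitely many polytopes (there are exactly five hollow $3$-polytopes of width $3=w_H(3)$, and by the paper's main theorem only finitely many empty $4$-simplices of width $>2$), so one prism or one bipyramid per member of the family yields only a finite family of $d$-polytopes. The infinitude must come from the other direction: fix a \emph{single} hollow $(d-1)$-polytope $Q$ of the desired width which is not a pyramid over some vertex $v$ and all of whose proper faces through $v$ are hollow or pyramids with apex $v$, and vary the height $h$ of the single lifted vertex $(v,h)$; Lemma~\ref{lemma:infinite-hollow+(semi)empty} shows these lifts preserve the size of $Q$ for infinitely many $h$, preserve its width for $h$ large, and are pairwise inequivalent since their volumes differ. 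Your $\pm M$ bipyramid is close in spirit to the paper's construction of a suitable $Q$ of width $w_H(d-2)$ (Lemma~\ref{lemma:bipyramids}), but you must then lift that one bipyramid in infinitely many ways rather than vary its base.
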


\begin{remark}
\label{rem:known_values}
None of the inequalities $w_H(d-2) \le w^\infty(d) \le w_H(d-1)$ or
{$w_E(d-1) \le  w^\infty(d)$} (for $d \ge 3$) is sharp, as the following
table of known values shows.

\[
\begin{array}{c|cc|c|c}
\multirow{2}{*}{$d$} & \multicolumn{2}{|c|}{\text{lower bounds}} & \multirow{2}{*}{$w^\infty(d)$} & \text{upper bound}\\
 & w_E(d-1)  &w_H(d-2)&  & w_H(d-1) \\
\hline
1 &-               &-           &0                &-\\
2 & 1             &-            &0               &1\\
3 & 1             &1           &1           	  &2	\\
4 & 1             &2	          &2           &3\\
5 & \ge 4        &3           &\ge 4&\ge 4\\
\end{array}
\medskip
\]
The values of $w^\infty(d)$, $d=1,2,3,4$, have been discussed
above.
For the rest:
\begin{itemize}
\item In dimension $1$, the unique hollow lattice segment is equivalent to $[0,1]$, and then $w_E(1)=w_H(1)=1$.

\item In dimension $2$, the second dilation of a unimodular triangle is the only hollow lattice polygon of width larger than one (see, e.g., \cite{Treutlein}). Hence $w_H(2)=2$ and, since this polygon is not empty, $w_E(2)=1$. 

\item In dimension $3$, Howe (\protect{\cite[Thm.~1.3]{Scarf}}) proved that $w_E(3)=1$. 
For $w_H(3)$, Averkov et
al. (\protect{\cite[Theorem~2.2]{AverkovWagnerWeismantel}}
and~\protect{\cite[Theorem~1]{AKW15}}) have classified all hollow $3$-polytopes and their maximum width is three (see more details in Lemma \ref{lemma:w_H(3)}), so $w_H(3)=3$.

\item In dimension $4$, Haase and Ziegler~\cite{HaaseZiegler} showed $w_E(4)\ge 4$, which implies $w^\infty(5)\ge 4$ by part (4) of Theorem~\ref{thm:summaryofbounds}. 
\end{itemize}
\end{remark}

\medskip

The structure of the paper is as follows. The monotonicity properties 
stated in parts (1) and (2) of Theorem~\ref{thm:summaryofbounds}
are proved at the beginning of 
Section~\ref{sec:threshold-lifts}.
We then prove the upper bound $w^\infty(d) \le w_H(d-1)$
(Lemma~\ref{lemma:finite-nonprojecting}) from the following statement, which
 combines results of Hensley~\cite{Hensley},
 Lagarias--Ziegler~\cite{LagariasZiegler} and
 Nill--Ziegler~\cite{NillZiegler}: all but finitely many
lattice $d$-polytopes of bounded size are hollow and project to hollow
$(d-1)$-polytopes.
This fact implies that in order to find an infinite family of lattice
$d$-polytopes of bounded size we can focus on lifts (see
Definition~\ref{defi:lift}) of hollow polytopes of one dimension
less. The fact that all but finitely many lifts of a lattice $(d-1)$-polytope are $d$-dimensional and have the same width (Theorem~\ref{theorem:same_width}) then implies that in order to 
decided whether a lattice polytope has infinitely many lifts of bounded size 
it is enough to look at \emph{tight} lifts, which are inclusion-minimal lifts of a
polytope (see
Definition~\ref{defi:tight} and Corollary~\ref{coro:equiv_statements}).

In Section~\ref{sec:infinitely-hollow-lifts} we prove sufficient properties
for hollow $(d-1)$-polytopes to have infinitely many lifts of
bounded size. In particular, we prove the existence of such
hollow $(d-1)$-polytopes of widths $w_E(d-1)$ and $w_H(d-2)$, which
provides the lower bounds $w_E(d-1) \le w^\infty(d)$
(Corollary~\ref{coro:infinite_nonsimplex}) and $w_H(d-2) \le
w^\infty(d)$ (Corollary~\ref{coro:infinite_bipyramid}).
Moreover, we get the following characterization
of the finiteness threshold width:

\begin{theorem}[Theorem~\ref{thm:one_hollow_Q} and Corollary~\ref{coro:equiv_statements}]
\label{thm:one_hollow_Q-intro}
For all $d\ge 3$, 
$w^\infty(d)$ equals the maximum width of a hollow lattice $(d-1)$-polytope $Q$ for which there are
infinitely many (equivalence classes of) lattice $d$-polytopes $P$ of bounded size projecting to $Q$.
\end{theorem}

One direction of the theorem is easy, since a $Q$ as in the statement has all but finitely many of its lifts of the same width as $Q$ (Theorem~\ref{theorem:same_width}). The other is less obvious since $w^\infty(d)$
might a priori be achieved by the existence of 
infinitely many hollow $(d-1)$-polytopes $Q$,
each with finitely many lifts of size $n$.

\begin{example}
\label{exm:threshold_via_lifts}
In dimension $3$, the infinite family of Reeve tetrahedra are lifts of size $4$ of a unit square, which is a hollow polygon of width one. On the other hand, the unique hollow polygon of width larger than one is the second dilation of the unimodular triangle, which has only finitely many lifts of bounded size (see~\cite[Corollary 22]{5points}). Hence $w^\infty(3)=1$.

In dimension $4$, observe that $w^\infty(4)\ge 2$ follows from the fact that the following hollow $3$-polytope of width two can be lifted to infinitely many empty $4$-simplices (Haase and Ziegler~\cite[Proposition 6]{HaaseZiegler}):
\[
Q=\conv\left\{
\begin{pmatrix}0\\0\\0\\ \end{pmatrix},
\begin{pmatrix}1\\0\\0\\ \end{pmatrix},
\begin{pmatrix}0\\1\\0\\ \end{pmatrix},
\begin{pmatrix}0\\0\\1\\ \end{pmatrix},
\begin{pmatrix}2\\2\\3\\ \end{pmatrix}
\right\}.
\]
\end{example}

Sections~\ref{sec:finitely-hollow-lifts} and~\ref{sec:dim4} are aimed at proving our main result $w^\infty(4)=2$ (Theorem~\ref{coro:main-intro}).
By Theorem~\ref{thm:one_hollow_Q-intro} and Example~\ref{exm:threshold_via_lifts}, it suffices to show that each hollow $3$-polytope of width larger than two has finitely many $4$-dimensional lifts of bounded size. For this we first prove sufficient conditions for lattice polytopes (in arbitrary dimension) to
have only finitely many lifts of bounded size (Section~\ref{sec:finitely-hollow-lifts}).
Subsequently in Section~\ref{sec:dim4} we apply this to the list of hollow $3$-polytopes of width larger than two. This list, containing only five polytopes, is derived from the 
classification of \emph{maximal} hollow $3$-polytopes by Averkov et al.~(\protect{\cite[Theorem 2.2]{AverkovWagnerWeismantel}} and~\protect{\cite[Theorem 1]{AKW15}}). 

\medskip

In light of these results, we ask the following questions.

\begin{question}
Besides the monotonicity in parts $(1)$ and $(2)$ of Theorem~\ref{thm:summaryofbounds},
does $w^\infty(d,n)\le w^\infty(d+1,n+1)$ always hold? 
The case $w^\infty(d,d+1) \le w^\infty(d+1,d+2)$ follows from
\protect{\cite[Proposition~1]{HaaseZiegler}}: every empty $d$-simplex
is a facet of infinitely many empty $(d+1)$-simplices of at least the
same width. 
\end{question}

\begin{question}
For all known values ($d\le 4$) we have $w^\infty(d) =
w^\infty(d,d+1)$. That is, the finiteness threshold width for all lattice
$d$-polytopes is determined by empty $d$-simplices. Does this hold for arbitrary $d$?
\end{question}

\subsection*{Acknowledgement:} We thank Benjamin Nill and Gennadiy Averkov for helpful discussions,
in particular for pointing us to references~\cite{AverkovWagnerWeismantel,AKW15}. 
An anonymous referee made several useful comments, in particular giving us the current proof of Theorem~\ref{theorem:same_width}, much simpler than the one we originally had.

\section{Finiteness threshold width and lifts of hollow polytopes}
\label{sec:threshold-lifts}

\subsection*{Monotonicity of the finiteness threshold widths}

Parts (1) and (2) of Theorem~\ref{thm:summaryofbounds} have the
following proofs:

\begin{proposition}
\label{proposition:monotone_n}
$w^\infty(d,n)\le w^\infty(d,n+1) $ for all $n\ge d+1$.
\end{proposition}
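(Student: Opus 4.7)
The plan is to argue by contrapositive: assuming that infinitely many inequivalent lattice $d$-polytopes of size $n$ have width $>W$, produce infinitely many inequivalent lattice $d$-polytopes of size $n+1$ with width $>W$. By the definition of $w^\infty(d,\cdot)$, this yields $w^\infty(d,n)\le w^\infty(d,n+1)$.

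For the construction, take any $d$-polytope $P$ of size $n$, pick a primitive integer linear functional $\ell$ (say, a coordinate), set $M:=\max_{p\in P}\ell(p)$ (an integer because $P$ is a lattice polytope), choose any lattice point $v$ with $\ell(v)=M+1$, and form $Q:=\conv(P\cup\{v\})$. A one-line slab calculation shows that any point $q=\lambda v+(1-\lambda)p$ of $Q$ with $p\in P$ satisfies $\ell(q)\le M+\lambda$, so the lattice hyperplane $\{\ell=M+1\}$ meets $Q$ only at $v$. Consequently every lattice point of $Q$ distinct from $v$ lies in $P$, and $|Q\cap\Z^d|=n+1$. Since $P\subseteq Q$ and the width is monotone under inclusion, $\width(Q)\ge\width(P)>W$, so $Q$ is a $d$-polytope of size $n+1$ with the required width.

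To finish, apply this construction to an infinite family $\{P_i\}$ of pairwise inequivalent size-$n$ polytopes of width $>W$, obtaining polytopes $\{Q_i\}$ of size $n+1$. The remaining point is to ensure that infinitely many of the $Q_i$ are pairwise inequivalent. This follows from the observation that for any lattice polytope $Q$ of size $n+1$, the polytopes $P$ of size $n$ with $Q=\conv(P\cup\{v\})$ for some lattice point $v$ are recovered as $P=\conv((Q\cap\Z^d)\setminus\{v\})$, so there are at most $n+1$ such $P$'s once $Q$ is fixed. Thus the relation "$P$ produces $Q$" descends to a finite-to-one correspondence on equivalence classes, and an infinite family of inequivalent $P_i$ forces an infinite family of inequivalent $Q_i$. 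No step presents a serious obstacle; the slab calculation is the only substantive ingredient, and it is immediate.
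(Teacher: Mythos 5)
Your overall strategy is the same as the paper's (grow each $P_i$ by one lattice point and then observe that the map $P\mapsto Q$ is at most $(n{+}1)$-to-one on equivalence classes, since $P=\conv((Q\cap\Z^d)\setminus\{v\})$), and the finite-to-one counting at the end is correct. But there is a genuine gap in the middle step: from the fact that the hyperplane $\{\ell=M+1\}$ meets $Q=\conv(P\cup\{v\})$ only at $v$ you conclude that ``every lattice point of $Q$ distinct from $v$ lies in $P$,'' and hence that $Q$ has size $n+1$. This does not follow. The slab calculation only controls the top level; the cone from $v$ over $P$ can sweep up new lattice points at levels $\le M$ that lie outside $P$. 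For instance, take $P=\conv\{(0,0),(1,0),(0,1)\}$ (size $3$), $\ell=x_1$, so $M=1$, and $v=(2,5)$. Then $\conv(P\cup\{v\})$ contains the segment from $(1,0)$ to $(1,3)$, so it picks up the lattice points $(1,1),(1,2),(1,3)$ and has size $7$, not $4$. Choosing $v$ ``any lattice point'' on $\{\ell=M+1\}$ therefore does not produce a polytope of size $n+1$, and the whole construction fails as stated.

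The paper repairs exactly this point: after forming $Q_i=\conv(P_i\cup\{q_i\})$, if $Q_i\setminus P_i$ contains more than one lattice point, one repeatedly replaces $Q_i$ by $\conv\bigl((Q_i\cap\Z^d)\setminus\{u\}\bigr)$ for a vertex $u$ of $Q_i$ not in $P_i$. Each such step keeps $P_i$ inside (so the width stays $>W$, or $=W$ in the paper's version, by monotonicity of width under inclusion) and strictly decreases the number of surplus lattice points, so the process terminates with a polytope of size exactly $n+1$ containing $P_i$. Adding this pruning step to your argument would make it complete; without it, the claimed size count is simply false.
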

\begin{proof}
Let $W\in \N$ be such that there exists an infinite family
$\{P_i\}_{i\in \N}$ of lattice $d$-polytopes of size $n$ and width $W$.
We are going to show that for each $P_i$ there is a $P'_i$ of size $n+1$ and width
$W$ containing $P_i$. To prove this, let $\ell_i$ be an integer
functional giving width $W$ to $P_i$, and assume without loss of
generality that $\ell_i(P_i)=[0,W]$. Taking any point $q_i \in \Z^d
\cap \ell_i^{-1}[0,W] \setminus P_i$ we easily get a
$Q_i=\conv(P_i\cup\{q_i\})$ of width $W$ and properly containing
$P_i$ (see Figure~\ref{fig:monotonicity}). If $Q_i \setminus P_i$ has more than one lattice point, remove
them one by one until only one remains (which can always be done;
simply choose a vertex $v$ of $Q_i$ not in $P_i$ and replace $Q_i$ with
the convex hull of $(Q_i \cap \Z^d) \setminus \{v\}$; then iterate).

\begin{figure}[htb]
\centerline{\includegraphics[scale=.8]{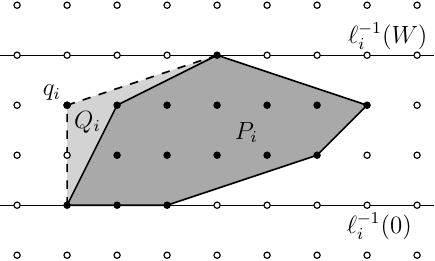}}
\caption{The setting of the proof of Proposition~\ref{proposition:monotone_n}.}
\label{fig:monotonicity}
\end{figure}

That implies the lemma except for the fact that 
different polytopes $P_i$ and $P_j$ may produce isomorphic
$P'_i$ and $P'_j$, so it is not obvious that $\{P_i'\}_{i\in \N}$ is
an infinite family.
But each element of $\{P_i'\}_{i\in \N}$ can only correspond to \emph{at most} $n+1$ 
elements from $\{P_i\}_{i\in \N}$ 
(because $P_i$ is recovered from $P'_i$ by removing one of its $n+1$ lattice points),
so the proof is complete.
\end{proof}

\begin{proposition}
\label{proposition:monotone_d}
$w^{\infty}(d)\le w^{\infty}(d+1)$, for all $d$.
\end{proposition}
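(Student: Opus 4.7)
The plan is to convert an infinite family of $d$-polytopes of bounded size and width $\ge W$ into an infinite family of $(d+1)$-polytopes of bounded size and width $\ge W$, thereby showing $w^\infty(d+1)\ge W$ whenever $w^\infty(d)\ge W$. Concretely, assume $w^\infty(d)\ge W$ for some integer $W\ge 1$ (if $W=0$ there is nothing to show). By definition there exist some $n\ge d+1$ and an infinite sequence $\{P_i\}_{i\in\N}$ of pairwise non-equivalent lattice $d$-polytopes of size $n$ with $\width(P_i)\ge W$.

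The proposed lift is the prism
\[
P_i' := P_i \times [0,W] \subset \R^{d+1}.
\]
Its lattice points are exactly the pairs $(p,k)$ with $p\in P_i\cap\Z^d$ and $k\in\{0,1,\dots,W\}$, so the size of $P_i'$ is the constant $n(W+1)$. A direct computation shows that for every non-zero integer functional $\ell'=(\ell,c)\in (\Z^{d+1})^*$, $\width_{\ell'}(P_i')=\width_\ell(P_i)+W|c|$. This is always at least $W$: if $\ell=0$ then $|c|\ge 1$, giving $W|c|\ge W$, and if $\ell\ne 0$ then $\width_\ell(P_i)\ge \width(P_i)\ge W$. Therefore $\width(P_i')\ge W$.

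The remaining step, which I view as the main point to check, is that $\{P_i'\}_{i\in\N}$ contains infinitely many unimodular equivalence classes. The key observation is that $P_i$ appears as a facet of $P_i'$, namely $P_i\times\{0\}$. If $P_i'\cong P_j'$ via a unimodular transformation $\phi$, then $\phi(P_i\times\{0\})$ is a facet of $P_j'=P_j\times[0,W]$, whose facets are either the two horizontal copies $P_j\times\{0\}$ and $P_j\times\{W\}$ (each equivalent to $P_j$) or the vertical prisms $F\times[0,W]$ over the facets $F$ of $P_j$. Since $P_j$ has only finitely many facets, each unimodular class of $P_j'$ accounts for only finitely many classes of $P_i$. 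Hence infinitely many classes among the $P_i$'s produce infinitely many classes among the $P_i'$'s, yielding $w^\infty(d+1, n(W+1))\ge W$ and thus $w^\infty(d+1)\ge W$. Since $W$ was arbitrary with $W\le w^\infty(d)$ (the case $w^\infty(d)=\infty$ being handled by letting $W$ be arbitrarily large), we conclude $w^\infty(d+1)\ge w^\infty(d)$.
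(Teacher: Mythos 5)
Your proposal is correct and follows essentially the same route as the paper: both take the prism $P_i\times[0,W]$ over an infinite family of $d$-polytopes of size $n$ and width (at least) $W$, note that its size is $n(W+1)$ and its width is still at least $W$, and then argue that each prism can arise from only finitely many of the $P_i$ so that infinitely many equivalence classes survive. Your facet-counting argument for that last step is a clean instance of the finiteness argument the paper invokes by reference to the preceding proposition.
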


\begin{proof}
Let $W\in \N$ be such that, for some $n\in \N$,
there is an infinite family $\{P_i\}_{i\in \N}$ of lattice $d$-polytopes of
size $n$ and width $W$. Then, ${\mathcal P}=\{P_i\times [0,W]\}_{i\in \N}$ is a 
family of $(d+1)$-polytopes of size $n\cdot (W+1)$ and width $W$. 
A priori two different $P_i$'s can give isomorphic polytopes in ${\mathcal P}$,
but each polytope in ${\mathcal P}$ can correspond to only finitely many $P_i$'s
since $P_i$ is the projection of $P_i\times [0,W]$ along the direction of an edge.
Hence ${\mathcal P}$ is infinite and $w^\infty(d+1) \ge W$.
\end{proof}

The following lemma proves part (3) of Theorem~\ref{thm:summaryofbounds}:

\begin{lemma}
\label{lemma:finite-nonprojecting}
Let $d< n\in \N$.
All but finitely many lattice $d$-polytopes of size bounded by $n$ 
are hollow and
admit a projection to some hollow lattice $(d-1)$-polytope.
In particular, $w^\infty(d) \le w_H(d-1)$ for all $d$.
\end{lemma}

\begin{proof}
As argued in the proof of Theorem~\ref{thm:ftw}, the number of non-hollow lattice $d$-polytopes of size bounded by $n$ is finite.
Hence, all but finitely many lattice
$d$-polytopes of size bounded by $n$ are hollow.

On the other hand, Nill and Ziegler~\cite[Corollary~1.7]{NillZiegler} proved that
all but finitely many hollow $d$-polytopes admit a projection to a
hollow $(d-1)$-polytope. 
And these have width at most that of their projection, which is $\le w_H(d-1)$.
\end{proof}

\subsection*{Finiteness threshold width via polytopes with infinitely many lifts of bounded size} 
\ 

\begin{definition}
\label{defi:lift}
We say that a lattice polytope $P\subset \R^d$ is a \emph{lift} of a lattice $(d-1)$-polytope $Q\subset \R^{d-1}$ if there is a lattice projection $\pi:\R^d\to \R^{d-1}$ with $\pi(P)=Q$. Without loss of generality, we will typically assume $\pi$ to be the map that forgets the last coordinate. 

Two lifts $P_1$ and $P_2$, with projections $\pi_1:P_1 \to Q$ and $\pi_2:P_2 \to Q$ are \emph{equivalent} if there is a unimodular equivalence $f:P_1\to P_2$ with $\pi_2\circ f = \pi_1$. That is, if for each $p\in \Z^d$, $f(p) \in \pi_2^{-1}(\pi_1(p))$ (the equivalence maps a point in the fiber of $p$ under $\pi_1$, to a point in the fiber of $p$ under $\pi_2$). See Figure~\ref{fig:lift_equiv} for examples of equivalent and non-equivalent lifts.

We say that ``$Q$ has finitely many lifts of bounded size'' if for every $n\in \N$ there are finitely many equivalence classes of lifts of $Q$ of size $n$. Accordingly, ``$Q$ has infinitely many lifts of bounded size'' means that there is an $n\in \N$ for which there are infinitely many equivalence classes of lifts of $Q$. 
\end{definition}

\begin{figure}[h]
\centerline{\includegraphics[scale=.7]{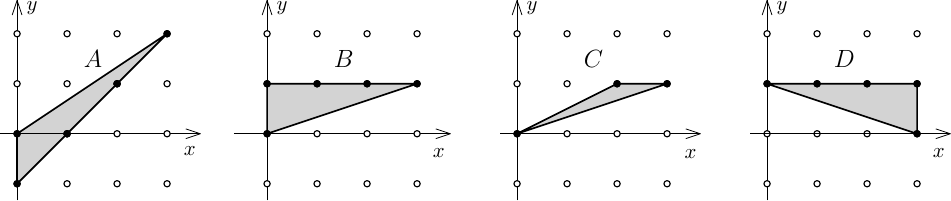}}
\caption{Polytopes $A,B,C,D \subset \R^2$ are lifts of $[0,3]\subset \R$ under projection $\pi(x,y)=x$. Only $A$ and $B$ are equivalent lifts. $D$ is equivalent to $A$ and $B$ as a lattice polytope, but not as a lift of $[0,3]$ under $\pi$.}
\label{fig:lift_equiv}
\end{figure}

\begin{remark}
\label{rm:equivalent_lift}
Saying that ``$Q\subset\R^{d-1}$ has infinitely many lifts of bounded size'' is equivalent to saying that ``there are infinitely many (equivalence classes of) lattice polytopes $P\subset\R^d$ of bounded size that have a lattice projection to $Q$''. The implication from right to left is trivial, and the implication from left to right follows from the fact that once $P$ is fixed there is a finite number of integer affine projections $P\to Q$ (an overestimate is $q^p$, where $p$ and $q$ are the numbers of lattice points in $P$ and $Q$, respectively; $q^{d+1}$ is also an upper bound, since an affine map is determined by the image of an affine basis).
\end{remark}

Our interest on these concepts comes from the following fact: for all $d\ge 3$, 
$w^\infty(d)$ is \emph{at least} the maximum width of a lattice $(d-1)$-polytope $Q$ that admits infinitely many lifts of bounded size (see Corollary~\ref{coro:one_hollow_Q}). 
For its proof we need a couple of technical results about the dimension and the width of the lifts of a polytope.

A lift of $Q$ may have the same dimension as $Q$ and still not be unimodularly equivalent to it. For example, the segment $[0,k]$ in $\R^1$ can be lifted to the primitive segment $\conv\{(0,0), (k,1)\}$. However, the number of different such lifts of $Q$ is finite, modulo the equivalence relation in Definition~\ref{defi:lift}:

\begin{lemma}
\label{lemma:finite_same_dim}
A $(d-1)$-dimensional polytope $Q$ has only finitely many $(d-1)$-dimensional lifts.
\end{lemma}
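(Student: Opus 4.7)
The plan is to parametrize $(d-1)$-dimensional lifts of $Q$ as graphs of affine functions on $\R^{d-1}$ over $Q$, impose the lattice condition at the vertices, and reduce the classification modulo lift-equivalence to counting cosets in a pair of lattices whose index is forced to be finite by $\dim Q = d-1$. Throughout I assume $\pi\colon \R^d \to \R^{d-1}$ is the projection forgetting the last coordinate.

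First I would observe that if $P \subset \R^d$ is a $(d-1)$-dimensional lattice polytope with $\pi(P) = Q$, then $\operatorname{aff}(P)$ is a hyperplane of $\R^d$ whose projection to $\R^{d-1}$ is surjective. Consequently $\operatorname{aff}(P)$ is the graph of a unique affine function $\phi(x) = \alpha \cdot x + \gamma$, with $\alpha \in \R^{d-1}$, $\gamma \in \R$, and $P = \{(x,\phi(x)) : x \in Q\}$. The lattice condition on $P$ then reduces to the finite set of integrality constraints $\phi(v) \in \Z$ for $v \in \ver(Q)$.

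Next I would fix a vertex $v_0 \in \ver(Q)$ and define $L := \langle v - v_0 : v \in \ver(Q)\rangle_{\Z} \subseteq \Z^{d-1}$. Since $Q$ is $(d-1)$-dimensional, the generators of $L$ span $\R^{d-1}$, so $L$ has full rank and hence finite index in $\Z^{d-1}$. Subtracting the integrality condition at $v_0$ from those at the other vertices, the constraints on $\phi$ become $\alpha \in L^*$ (the dual lattice of $L$) together with $\alpha \cdot v_0 + \gamma \in \Z$, the latter determining $\gamma$ modulo $\Z$ once $\alpha$ is fixed.

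Turning to equivalence, I would check that any unimodular affine map $f\colon \R^d \to \R^d$ satisfying $\pi \circ f = \pi$ identically is forced by the integer matrix condition to have the form
\[
f(x_1,\ldots,x_d) = \bigl(x_1,\ldots,x_{d-1},\; u \cdot (x_1,\ldots,x_{d-1}) + \epsilon x_d + c\bigr)
\]
with $u \in \Z^{d-1}$, $c \in \Z$, $\epsilon \in \{\pm 1\}$, and such an $f$ sends the lift parametrized by $(\alpha, \gamma)$ to the one parametrized by $(\epsilon \alpha + u, \epsilon \gamma + c)$. Since $\Z^{d-1} \subseteq L^*$, the quotient $L^*/\Z^{d-1}$ has order $[\Z^{d-1}:L] < \infty$, giving finitely many classes of $\alpha$; once $\alpha$ is fixed, $\gamma$ is then determined modulo $\Z$. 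The main subtlety I anticipate is that Definition~\ref{defi:lift} formally allows an equivalence $f\colon P_1 \to P_2$ that only satisfies $\pi \circ f = \pi$ on $P_1$ rather than on all of $\R^d$; but enlarging the group of allowed $f$'s can only merge equivalence classes, so the count coming from the globally $\pi$-preserving maps already yields the desired finite upper bound.
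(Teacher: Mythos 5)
Your proposal is correct and follows essentially the same route as the paper: both parametrize a $(d-1)$-dimensional lift as the graph over $Q$ of an affine function integral on $\ver(Q)$, identify such functions with elements of the dual of the (full-rank) lattice spanned by the vertices, and count them modulo integer functionals, giving a finite index. Your write-up is somewhat more detailed on the equivalence relation (explicitly describing the $\pi$-preserving unimodular maps and noting that the paper's weaker equivalence can only merge classes), but the underlying argument is the same.
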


\begin{proof}
Every $(d-1)$-dimensional lift $P$ of $Q$ can be described as follows: there is an affine map $f:\R^{d-1}\to \R$ with
\[
P=\conv\{ (v,f(v)): v \text{ is a vertex of $Q$}\},
\]
and such that $f$ is integer in all vertices of $Q$. Assuming, without loss of generality, that $f$ is linear and the origin is a vertex of $Q$, this implies $f\in \Lambda(Q)^*$, where $\Lambda(Q)$ is the lattice spanned by the vertices of $Q$. Two such functionals give equivalent lifts if, and only if, they are in the same class modulo $(\Z^{d-1})^*$. Thus, the number of different lifts equals the index of $(\Z^{d-1})^*$ in $\Lambda(Q)^*$.
\end{proof}

\begin{theorem}
\label{theorem:same_width}
 Let $Q \subset \R^{d-1}$ be a lattice $(d-1)$-polytope of width
 $W$. Then all lifts $P \subset \R^d$ of $Q$
 have width $\le W$. All but finitely many of them have width $= W$.
\end{theorem}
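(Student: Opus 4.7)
The inequality $\width(P)\le W$ is immediate: pulling back a functional $\ell\in(\Z^{d-1})^*$ with $\width_\ell(Q)=W$ via $\pi$ yields an integer functional $\ell\circ\pi\in(\Z^d)^*$ with $\width_{\ell\circ\pi}(P)=\width_\ell(\pi(P))=W$, so $\width(P)\le W$.

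For the second statement, the plan is to bound the volume of each lift $P$ with $\width(P)\le W-1$ and then apply known finiteness results for lattice polytopes of bounded volume. Let $\ell'=(\ell,c)\in(\Z^d)^*$ witness $\width_{\ell'}(P)\le W-1$. First, $c\ne 0$: if $c=0$ then $\ell'$ factors through $\pi$ and $\width_{\ell'}(P)=\width_\ell(Q)\ge W$, contradicting $\width_{\ell'}(P)\le W-1$. With $c\ne 0$, $P$ is contained in the slab $\{(x,y)\in\R^d:\alpha\le\ell(x)+cy\le\alpha+W-1\}$, where $\alpha=\min_P\ell'$, and this slab intersects $\pi^{-1}(Q)$ in a bounded region of Euclidean volume $\operatorname{vol}(Q)(W-1)/|c|\le\operatorname{vol}(Q)(W-1)$ (using $|c|\ge 1$). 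Hence $\operatorname{vol}(P)$ is bounded by a constant that depends only on $Q$ and $W$.

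By the Lagarias--Ziegler theorem~\cite[Thm.~2]{LagariasZiegler} (the same result invoked in the proof of Lemma~\ref{lemma:finite-nonprojecting}), only finitely many $d$-dimensional lattice polytopes have bounded volume, up to unimodular equivalence. For each such equivalence class $[P_0]$, only finitely many inequivalent lifts of $Q$ arise from representatives of $[P_0]$: a lift is encoded by a primitive vector $v\in\Z^d$ such that projecting $P_0$ along $v$ produces $Q$, and this imposes a rigid discrete condition with only finitely many solutions (while the symmetry group of $P_0$ is finite). Lifts of dimension $d-1$ are handled separately by Lemma~\ref{lemma:finite_same_dim}. Putting everything together, only finitely many equivalence classes of lifts of $Q$ have width strictly less than $W$. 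The main technical obstacle is the last step, which relates unimodular equivalence of polytopes to the finer equivalence of lifts; this depends on the observation that for a fixed $P_0$ the set of admissible projection directions to $Q$ is discrete and constrained, hence finite.
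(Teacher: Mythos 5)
Your first paragraph matches the paper's (one-line) treatment of the inequality $\width(P)\le W$. For the finiteness claim you take a genuinely different route. The paper fixes a full-dimensional lift $S$ of a simplex $T\subseteq Q$, builds an auxiliary full-dimensional region $R$, and uses Lemma~\ref{lemma:finitely_many_functionals} to show that any lift reaching height $\ge 2L$ above $Q$ already has width $\ge W$, so that the exceptional lifts all sit inside $Q\times[0,2L]$, of which there are finitely many subpolytopes. You instead bound the Euclidean volume of any full-dimensional lift of width $<W$ by $\operatorname{vol}(Q)(W-1)$ via a Fubini argument on the slab $\alpha\le\ell'\le\alpha+W-1$ (the observation that $c\ne 0$ and the fiberwise length $(W-1)/|c|\le W-1$ are both correct), and then invoke Lagarias--Ziegler. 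The volume bound itself is clean and correct.

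However, the last step has a genuine gap, which you yourself flag. Lagarias--Ziegler gives finitely many polytopes $P_0$ up to unimodular equivalence, but the theorem counts lifts up to the finer equivalence of Definition~\ref{defi:lift}, so you must show that a fixed full-dimensional $P_0$ admits only finitely many projections $\pi$ with $\pi(P_0)=Q$. Your encoding of a lift by a primitive direction $v$ is not quite the right object (the projection also involves an identification of $\Z^d/\Z v$ with $\Z^{d-1}$, although for a fixed kernel these differ only by the finite automorphism group of $Q$), and ``rigid discrete condition with only finitely many solutions'' is an assertion rather than an argument: a priori infinitely many directions $v$ could project $P_0$ onto a copy of $Q$. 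The gap is closable with the paper's own Lemma~\ref{lemma:finitely_many_functionals}: if $\pi(P_0)=Q$, then each coordinate functional $e_i^*\circ\pi$ gives $P_0$ width $\width_{e_i^*}(Q)\le M$ for a constant $M$ depending only on $Q$, and since $P_0$ is full-dimensional there are only finitely many integer functionals of width $\le M$ on $P_0$, hence finitely many possible linear parts for $\pi$ (and the translation part is then determined). With that supplement, together with your separate treatment of $(d-1)$-dimensional lifts via Lemma~\ref{lemma:finite_same_dim}, the proof is complete.
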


\begin{proof}
	The first part of the statement is clear, since projecting cannot decrease width.
	It remains to show that only finitely many lifts of $Q$ have width strictly smaller than $\width(Q)$, 
	which follows from the next claim: if a lift $P$ of $Q$ has width smaller than $\width(Q)$ then
	\[ \vol(P) \le \width(P) \vol(Q) < \width(Q) \vol(Q).\]
	Finiteness of the volume of $P$ implies finitely many possibilities for $P$.
	
To prove the volume bound, let $P$ be a lift of $Q$ of width $W':=\width(P)<W$, and let $\ell=(\ell_1,\dots,\ell_d) \in (\Z^d)^*\setminus\{0\}$ be an integer functional attaining $\width_{\ell}(P) = W'$. 
We have that $\ell_d \neq 0$ since $\ell_d = 0$ implies $\width_{\ell}(P) = \width_{(\ell_1,\dots,\ell_{d-1})}(Q) \ge W$. 
Assume without loss of generality that $\ell(P) = [0, W']$. 
Then we have that $P$ is contained in
\[
(Q\times \R) \cap \ell^{-1}([0, W']),
\]
which is a slanted prism projecting to $Q$ and with every fiber of length $W'/|\ell_d|$. (For the latter, observe that each fiber is a segment with endpoints $(x,a)$, $(x,b)$ for some $x \in \R^{d-1}$ and with $W' = |\ell(x,a) - \ell (x,b)| = |\ell_d| \, |b-a|$).
Hence, the Euclidean volume of this slanted prism is
\[
\frac{W'}{|\ell_d|} \cdot \vol(Q)
\]
and, since $P$ is contained in it, we get that
\[
\vol(P)\le
\frac{W'}{|\ell_d|} \cdot \vol(Q) 
\le  W' \cdot \vol(Q) < W\cdot \vol(Q),\]
where the middle inequality follows from $\ell_d$ being a nonzero integer.

We thank an anonymous referee for this proof, significantly simpler than the one
we originally had.
\end{proof}

This gives us:

\begin{corollary}
\label{coro:one_hollow_Q}
For all $d\ge 3$, 
$w^\infty(d)$ is at least the maximum width of a lattice $(d-1)$-polytope that admits infinitely many lifts of bounded size.
\end{corollary}
\begin{proof}
Let $Q$ be a lattice $(d-1)$-polytope of width $W$ with infinitely many lifts of bounded size. Since all but finitely many of its lifts are $d$-dimensional (Lemma~\ref{lemma:finite_same_dim}) and of width $W$ (Theorem~\ref{theorem:same_width}), we can find among them an infinite family of lattice $d$-polytopes of bounded size and width $W$. Hence $w^\infty(d)\ge W$.
\end{proof}

Less obvious is the converse, that we prove in Theorem~\ref{thm:one_hollow_Q}.

\subsection*{Tight lifts}
We finish this section showing that in order to decide whether a given $Q$ has infinitely many lifts of bounded size it is enough to look at
\emph{tight} lifts. This will simplify the work in the rest of the paper:

\begin{definition}
\label{defi:tight}
Let $Q \subset \R^{d-1}$ be a $(d-1)$-dimensional lattice polytope. We say that a lift $P \subset \R^d$ of $Q$ is \emph{tight} if the projection sending $P$ to $Q$ bijects their sets of vertices.
That is, if $P=\conv\{(v,h_v): v\in \ver(Q)\}$ for some $\mathbf{h}\in \Z^{\ver(Q)}$.

See Figure~\ref{fig:tight_lift} for examples of tight and not tight lifts.

\begin{figure}[htb]
\centerline{\includegraphics[scale=.7]{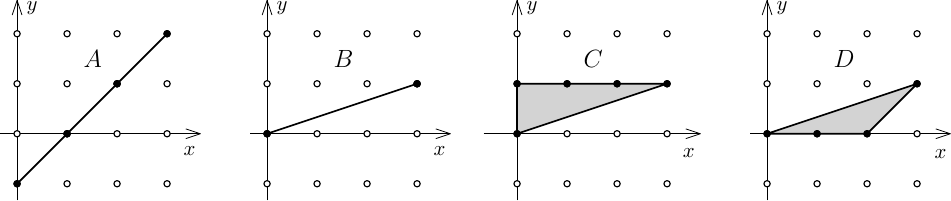}}
\caption{Polytopes $A,B,C,D \subset \R^2$ are lifts of $[0,3]\subset \R$ under the projection $\pi(x,y)=x$. $A$ and $B$ are tight lifts; $C$ and $D$ are not.}
\label{fig:tight_lift}
\end{figure}
\end{definition}

Notice that a tight lift can be $d$ or $(d-1)$-dimensional. The following lemma states that every lift of $Q$ contains a tight lift.

\begin{lemma}
\label{lemma:finite-containingP}
Let $P\subset \R^d$ be a (not necessarily full-dimensional) lift of a lattice $(d-1)$-polytope $Q$. Then, there are only finitely many lifts of $Q$ of bounded size that contain $P$.
\end{lemma}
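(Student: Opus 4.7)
I plan to show every such $P'$ is contained in a bounded region of $\R^d$; then finiteness follows since a bounded region contains only finitely many lattice polytopes. Since $\pi(P')=Q$, horizontal boundedness is automatic, so it suffices to bound the vertical extent of $P'$.

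Write $P'=\{(q,z)\in Q\times\R:\alpha(q)\le z\le\beta(q)\}$, with $\alpha$ convex piecewise-linear and $\beta$ concave piecewise-linear on $Q$. For each vertex $v$ of $Q$, the fiber $\pi^{-1}(v)\cap P'$ is a face of $P'$ (as the preimage of a vertex of the projection), so its extreme points are vertices of $P'$, which are lattice points; hence $\alpha(v),\beta(v)\in\Z$. Moreover every vertex of $Q$ is the image of some vertex of $P$, so the fiber of $P$ over $v$ contains lattice points, at some integer heights $a_v\le b_v$, and the containment $P\subseteq P'$ forces $\alpha(v)\le a_v$ and $\beta(v)\ge b_v$.

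Now fix a size $n$ and consider any $P'$ with $|P'\cap\Z^d|\le n$. The fiber of $P'$ over each lattice point $q\in Q\cap\Z^{d-1}$ is a vertical interval containing at most $n$ integer heights, so $\beta(q)-\alpha(q)<n+1$. Convexity of $\alpha$ gives $\alpha(q)\le\max_v \alpha(v)\le\max_v a_v$ for every $q\in Q$, and combining with the fiber bound produces an upper bound on $\beta(q)$ at every lattice point $q\in Q\cap\Z^{d-1}$; a symmetric argument yields a lower bound on $\alpha(q)$. Every vertex of $P'$ is a lattice point of the form $(q,\alpha(q))$ or $(q,\beta(q))$ with $q\in Q\cap\Z^{d-1}$, so all vertices of $P'$ lie in $Q\times[-M,M]$ for some $M=M(P,Q,n)$, and hence so does $P'$.

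The delicate point is the integrality of $\alpha(v)$ and $\beta(v)$ at each vertex $v$ of $Q$: this is what turns the real inequalities $\alpha(v)\le a_v$, $\beta(v)\ge b_v$ (with $a_v,b_v\in\Z$) into genuine integer constraints and lets the fiber-size bound control the integer vertex heights of $P'$. Once that is in hand, the rest is a routine bookkeeping of convex/concave envelope bounds.
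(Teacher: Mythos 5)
Your argument is correct and rests on the same mechanism as the paper's proof: the containment $P\subseteq P'$ anchors each fiber of $P'$ near the corresponding fiber of $P$, and the bound on the size of $P'$ limits how many lattice points a vertical lattice segment in $P'$ can contain, which bounds all heights. The paper's version is slightly more direct --- it picks an arbitrary (not necessarily lattice) point of $P$ in the fiber over \emph{every} lattice point of $Q$ and compares heights there, which makes your detour through the vertices of $Q$, the convex/concave envelopes $\alpha,\beta$, and the integrality of $\alpha(v),\beta(v)$ (which you single out as the delicate point, but which is in fact not needed) unnecessary.
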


\begin{proof}

For each $q\in Q \cap \Z^{d-1}$, pick $h_q \in \R$ such that $p_q=(q,h_q)\in P$ (these exist as $P$ projects to $Q$).

Let $P'\subset \R^d$ be any lift of $Q$ that contains $P$. Given $p'\in P' \cap \Z^d$, then $p'=(q,h')$ for some $q \in Q \cap \Z^{d-1}$
and $h'\in \Z$. Without loss of generality assume that $h'\ge h_q$ (the other case is symmetric).
Then $P'$ contains the segment $\conv\{p_q,p'\}=\{q\} \times [h_q,h']  \subset P'$, which already contains $h' - \lceil h_q \rceil +1$ lattice points (see Figure~\ref{fig:containingP}).
Since the size of $P'$ is bounded, there are finitely many possibilities for $h'$ and hence for all points of $P'$.
\medskip
\begin{figure}[htb]
\centerline{\includegraphics[scale=.8]{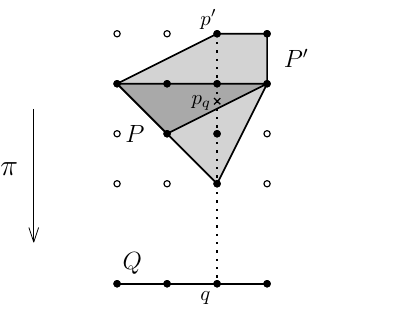}}
\caption{The segment $\pi^{-1}(q)\cap P$ has to be bounded for all $q\in Q\cap \Z^d$ in the proof of Lemma~\ref{lemma:finite-containingP}.}
\label{fig:containingP}
\end{figure}
\end{proof}

The results of this section lead to the following:
\begin{corollary}
\label{coro:equiv_statements}
Let $Q \subset \R^{d-1}$ be a lattice $(d-1)$-polytope. The following are equivalent:

\begin{enumerate}
\item[(1)] There are infinitely many (isomorphism classes of) lattice $d$-polytopes of bounded size projecting to $Q$.
\item[(2)] $Q$ has infinitely many lifts of bounded size.
\item[(3)] $Q$ has infinitely many lifts of bounded size and of the same width as $Q$.
\item[(4)] $Q$ has infinitely many tight lifts of bounded size.
\end{enumerate}
In any of those cases, the width of $Q$ is a lower bound for $w^\infty(d)$.
\end{corollary}

\begin{proof}
$(1) \Longrightarrow (2)$, $(2) \Longleftarrow (3)$, and $(2) \Longleftarrow (4)$ are obvious. 
For the converses:
$(2) \Longrightarrow (1)$ is Remark~\ref{rm:equivalent_lift} together with Lemma~\ref{lemma:finite_same_dim},
$(2) \Longrightarrow (3)$ follows from Theorem~\ref{theorem:same_width}, and
$(2) \Longrightarrow (4)$ comes from Lemma~\ref{lemma:finite-containingP} and the fact that any lift of a polytope contains a tight lift.
That $w^\infty(d) \ge \width(Q)$ is Corollary~\ref{coro:one_hollow_Q}.
\end{proof}
\medskip


\section{Hollow polytopes with infinitely many lifts of bounded size}
\label{sec:infinitely-hollow-lifts}

By a \emph{pyramid} we mean a polytope with all but one of its vertices (called the \emph{apex}) contained in a facet (called the \emph{base}). 

\begin{lemma}
\label{lemma:infinite-hollow+(semi)empty}
Let $Q\subset \R^{d-1}$ be a lattice hollow $(d-1)$-polytope and let $v\in \ver(Q)$ be such that $Q$ is not a pyramid with apex at $v$ (That is, $Q':=\conv(\ver(Q) \setminus \{v\})$ is $(d-1)$-dimensional). Suppose that every proper face $F$ with $v\in F$ is either hollow or a pyramid with apex $v$.
Then, for every $h\in \Z\setminus \{0\}$ the $d$-dimensional tight lift $P(h):=\conv\left((Q'\times\{0\}) \cup \{(v,h)\}\right)$ of $Q$ has the following properties:
\begin{enumerate}
\item $\size(P(h)) \le \size(Q)$, with equality for infinitely many values of $h$.
\item $\width(P(h)) =\width (Q)$ for every sufficiently large $h$.
\end{enumerate}
See Figure~\ref{fig:infinitely} for an example of this layout.
\end{lemma}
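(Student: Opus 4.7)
The plan is to reduce part (1) to a single-point fiber claim: for each $q \in Q \cap \Z^{d-1}$ the fiber of the projection $\pi: P(h) \to Q$ over $q$ is a single point $(q, \lambda_q h) \in \R^d$, with $\lambda_q \in [0, 1] \cap \mathbb{Q}$ having a denominator bounded independently of $h$. Given this, each fiber contributes at most one lattice point of $P(h)$, so $\size(P(h)) \le \size(Q)$; and equality holds whenever $h$ is a multiple of the $\lcm$ of the denominators of the $\lambda_q$ over $q \in Q \cap \Z^{d-1}$, giving infinitely many such $h$.

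For $q = v$ the claim is trivial ($\lambda_v = 1$). For $q \neq v$, let $G_q$ be the minimal face of $Q$ containing $q$; since $Q$ is hollow, $q \in \partial Q$ and $G_q$ is proper. If $v \notin G_q$, I pick a facet of $Q$ containing $G_q$ but not $v$ (such a facet exists, or else $v$ would lie in $\bigcap_{F \supseteq G_q} F = G_q$); its outward normal $n$ satisfies $n \cdot v < n \cdot q$, and this strict inequality forbids any $\lambda > 0$ in a decomposition $q = \lambda v + (1-\lambda) z$ with $z \in Q'$, so the fiber collapses to $\{(q, 0)\}$. If $v \in G_q$, the hypothesis forces $G_q$ to be hollow or a pyramid with apex $v$; hollowness is ruled out because $q$ lies in the relative interior of $G_q$, so $G_q$ is a pyramid with base $G_q^0$ and $v \notin \operatorname{aff}(G_q^0)$. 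The line $L$ through $v, q$ then meets $\operatorname{aff}(G_q^0)$ transversely at a unique point $z$, and the identity $G \cap Q' = \conv(\ver(G) \setminus \{v\})$ (valid for every face $G$ of $Q$, because a supporting hyperplane of $G$ forces any convex combination landing in $G$ to use only vertices of $G$) yields $L \cap Q' = L \cap G_q \cap Q' = L \cap G_q^0 = \{z\}$, so the fiber over $q$ is the single point determined by $z$.

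For part (2) I invoke Theorem~\ref{theorem:same_width}: a direct analysis of projection-preserving unimodular equivalences of tight lifts shows $P(h_1)$ and $P(h_2)$ are equivalent only when $h_2 = \pm h_1$, so the family $\{P(h)\}_{h \in \Z \setminus \{0\}}$ represents infinitely many equivalence classes of lifts of $Q$. The theorem then implies that only finitely many of them have width strictly less than $\width(Q)$, so $\width(P(h)) = \width(Q)$ for all $h$ of sufficiently large absolute value.

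The main obstacle is the pyramid case in part (1): combining the transversality between the apex-to-base line $L$ and $\operatorname{aff}(G_q^0)$ with the face-intersection identity $G \cap Q' = G'$ to conclude that $L$ meets $Q'$ at exactly one point rather than in a positive-length segment.
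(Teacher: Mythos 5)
Your argument is correct and follows essentially the same route as the paper's: both analyze the fiber of $\pi$ over each lattice point $q\in Q$ via its carrier face (splitting into the cases $v\notin G_q$, $G_q$ hollow, and $G_q$ a pyramid with apex $v$) to show each fiber is a single point whose height is $h$ times a rational constant, and both deduce part (2) from Theorem~\ref{theorem:same_width} together with the observation that the $P(h)$ are pairwise non-equivalent except possibly for $h\mapsto -h$ (the paper justifies this last point via the volume of $P(h)$ being proportional to $|h|$, which you could substitute for your unproved ``direct analysis'').
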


\begin{figure}[htb]
\centerline{\includegraphics[scale=.9]{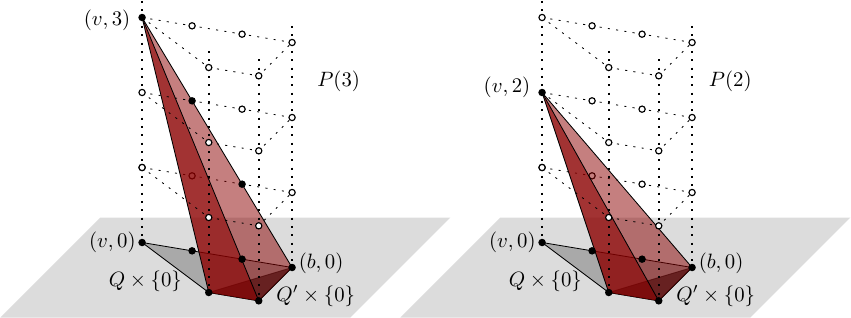}}
\caption{The setting of Lemma~\ref{lemma:infinite-hollow+(semi)empty}. The figure shows the hollow polygon $Q$ and two of its tight lifts, $P(3)$ and $P(2)$. One of the edges of $Q$ containing $v$ is empty, and the other is a $1$-dimensional  pyramid over a point $b$, with the distance from $v$ to $b$ being $3$. This implies that $P(3)$ has as many lattice points as $Q$ and $P(2)$ has strictly less lattice points.}
\label{fig:infinitely}
\end{figure}

\begin{proof}
Fix $h\in\Z\setminus \{0\}$ and let $P=P(h)$ be as in the statement.

For the first statement, let $q \in Q\cap \Z^{d-1}$. We claim that the fiber $\pi^{-1}(q)$ has at most one lattice point in $P$, with equality in many cases. For this, let $F$ be the carrier face of $q$ in $Q$ (that is, the unique face with $q \in \operatorname{relint}(F)$). Since $Q$ is hollow, $F$ is a proper face. By assumption, there are three possibilities for $F$:
\begin{itemize}
\item \emph{$F$ does not contain $v$.} Then $\pi^{-1}(F) \cap P = F\times \{0\}$. In particular, $(q,0)$ is the only lattice point of $P$ in the fiber $\pi^{-1}(q)$.
\item \emph{$v\in F$ and $F$ is hollow.} Since $q\in\operatorname{relint}(F)$, we must have $F=\{q\}=\{v\}$. In particular, $(v,h)$ is the only lattice point of $P$ in the fiber $\pi^{-1}(q)$.
\item \emph{$F$ is a pyramid with apex at $v$.} Let $F'$ be the base of the pyramid. Remember that $v$ is lifted to $(v,h)$ and every other vertex $w$ of $F'$ is lifted to $(w,0)$. In particular, the face $\pi^{-1}(F) \cap P$ of $P$ equals the affine image of $F$ under the map
$x\mapsto (x,h\cdot \dist(F',x)/\dist(F',v))$, where $\dist(F', x)$ denotes the lattice distance from $x$ to (the hyperplane spanned by) $F'$. Thus, $(q,h\cdot\dist(F',q)/\dist(F',v))$ is the only point of $P$ in the fiber $\pi^{-1}(q)$. That point will be a lattice point if (but perhaps not only if) $h$ is an integer multiple of $\dist(F',v)$.
\end{itemize}

In particular, we have $\size(P(h)) = \size(Q)$ for any $h$ that is an integer multiple of 
$
\lcm\{\dist(F',v): F \text{ face of }Q\text{ that is a pyramid with base }F'\text{ and apex }v\}.
$

The second statement follows directly from Theorem~\ref{theorem:same_width}. Indeed, the polytopes $P(h)$ are unimodularly non-isomorphic for different values of $|h|$, since their volume is proportional to $|h|$.
\end{proof}

\begin{corollary}
\label{coro:holds-empty-nonsimplex}
Let $Q$ be a hollow polytope and not a simplex.
If $Q$ is either empty or simplicial then it has infinitely many lifts of the 
same size and width of $Q$.
\end{corollary}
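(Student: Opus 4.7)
The plan is to deduce this corollary as a direct application of Lemma~\ref{lemma:infinite-hollow+(semi)empty}, so the task reduces to exhibiting a vertex $v\in\ver(Q)$ satisfying the two assumptions of that lemma: (i) $Q$ is not a pyramid with apex $v$, i.e.\ $Q':=\conv(\ver(Q)\setminus\{v\})$ is $(d-1)$-dimensional; and (ii) every proper face $F\ni v$ of $Q$ is either hollow or a pyramid with apex $v$.

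For (i), I will use the non-simplex assumption: if removing \emph{every} single vertex dropped the dimension of the convex hull, then no vertex would lie in the affine span of the remaining ones, forcing all of $\ver(Q)$ to be affinely independent and $Q$ to be a simplex. So at least one vertex $v$ must yield $\dim Q'=d-1$. For (ii), the two alternatives built into the hypothesis of the corollary are precisely what is needed. If $Q$ is \emph{empty}, every proper face $F\subseteq Q$ inherits the property of having only its vertices as lattice points, so $F$ is hollow whenever $\dim F\ge 1$; the only remaining proper face through $v$ is $F=\{v\}$, which is already implicitly covered in the proof of Lemma~\ref{lemma:infinite-hollow+(semi)empty} (there it is the sole shape a ``hollow'' face through $v$ can take). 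If $Q$ is \emph{simplicial}, every proper face of $Q$ is a simplex, and a simplex containing $v$ is a pyramid with apex $v$. Either way, both hypotheses of the lemma are met.

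Applying Lemma~\ref{lemma:infinite-hollow+(semi)empty} then yields a family of lifts $\{P(h)\}_{h\in\Z\setminus\{0\}}$ of $Q$ with $\size(P(h))\le\size(Q)$ and equality along an arithmetic progression of values of $h$, and with $\width(P(h))=\width(Q)$ once $|h|$ is large. Since the volumes of the $P(h)$ scale linearly with $|h|$, they are pairwise non-equivalent, so restricting $h$ to the intersection of the two favourable sets of indices produces the desired infinite family of lifts of $Q$ having the same size and width as $Q$. The only mildly delicate point is the degenerate face $F=\{v\}$, which is not hollow under the strict definition but is already absorbed by the lemma's proof, so I foresee no substantive obstacle beyond routine bookkeeping.
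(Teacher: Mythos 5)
Your proposal is correct and follows exactly the paper's route: pick a vertex $v$ at which $Q$ fails to be a pyramid (which exists because $Q$ is not a simplex), check the face condition via emptiness or simpliciality, and invoke Lemma~\ref{lemma:infinite-hollow+(semi)empty}. You simply spell out the details (including the harmless degenerate face $F=\{v\}$) that the paper's two-line proof leaves implicit.
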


\begin{proof}
Since $Q$ is not a simplex, there is a vertex $v$ such that $Q$ is not a pyramid with apex at $v$.
Being empty or simplicial guarantees the conditions of Lemma~\ref{lemma:infinite-hollow+(semi)empty} for $v$ are met.
\end{proof}

These results give us a way to lower-bound $w^\infty(d)$; if a lattice $(d-1)$-polytope $Q$ is in the conditions of Lemma~\ref{lemma:infinite-hollow+(semi)empty} or the Corollary~\ref{coro:holds-empty-nonsimplex}, then $w^\infty(d)\ge \width(Q)$. 

\begin{definition}
\label{def:maximal}
A hollow lattice $d$-polytope is called \emph{hollow-maximal} if it is
maximal under inclusion of hollow lattice $d$-polytopes.

An empty lattice $d$-polytope is called \emph{empty-maximal} if it is
maximal under inclusion of empty lattice $d$-polytopes.
\end{definition}

\begin{lemma}
\label{lemma:max_hollow_or_empty}
Let $Q$ be a hollow-maximal or empty-maximal $d$-polytope, for $d\ge 2$. Then, for every vertex $v$ of $Q$ there is a lattice point $u\in Q$ that is not contained in any facet containing $v$.
\end{lemma}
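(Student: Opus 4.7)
The plan is a proof by contradiction: I will assume every lattice point of $Q$ lies in some facet containing $v$ and construct a lattice polytope $\hat{Q}$ strictly containing $Q$ that is still hollow (respectively, empty), contradicting maximality. The first step is an easy observation: every facet $F$ of $Q$ not containing $v$ has no lattice point in its relative interior, because such a point would lie in $F$ and no other facet, hence in no facet through $v$, contradicting the hypothesis. Since $v$ is a vertex of a $d$-polytope with $d\ge 2$, at least one such facet $F$ exists; I fix it, let $n$ be its primitive outer normal, $c = c_F$ its facet value, and $H'=\{x : \langle n, x\rangle = c+1\}$ the next lattice hyperplane past $F$.

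The main technical step is to produce a lattice point $w\in H'$ satisfying every facet inequality of $Q$ except that of $F$ --- that is, $w$ lies in the ``cylinder'' $\{x : \langle n_i, x\rangle\le c_i \text{ for all } i\ne F\}$. I plan to take a vertex $u$ of $F$ and look for a primitive lattice vector $\delta$ with $\langle n, \delta\rangle = 1$ and $\langle n_i, \delta\rangle\le 0$ for every facet $F_i\ne F$ containing $u$; setting $w := u+\delta$ then places $w$ on $H'$ inside the cylinder. The feasible $\delta$'s are the height-one lattice points of a full-dimensional rational polyhedral cone at $u$, and their existence follows from a Hilbert-basis argument together with the primitivity of $n$; if a given choice of $u$ does not succeed, one iterates over the finitely many vertices of $F$ and facets of $Q$ not through $v$. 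This lattice-geometric existence is the main obstacle of the proof.

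Finally, I verify that $\hat{Q} := \conv(Q\cup\{w\})$ is hollow (respectively, empty). Because $w$ lies in the cylinder past $F$, only the facet $F$ of $Q$ is visible from $w$, so every other facet of $Q$ remains a facet of $\hat{Q}$. The lattice points of $Q$ in $F$ lie in $\partial F$ by the first observation, hence in another facet of $Q$, and thus remain on $\partial\hat{Q}$. Any lattice point of $\hat{Q}\setminus Q$ must satisfy $\langle n, x\rangle = c+1$ (since $w$ is the maximizer of $\langle n, \cdot\rangle$ on $\hat{Q}$), hence must equal $w$, which is a vertex. This gives no new interior lattice points in the hollow case; in the empty case one also checks, by a small perturbation of supporting functionals, that each old vertex of $Q$ remains a vertex of $\hat{Q}$. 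Hence $\hat{Q}$ is a strictly larger hollow (resp.\ empty) lattice polytope, contradicting the maximality of $Q$.
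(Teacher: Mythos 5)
Your overall strategy---assume every lattice point of $Q$ lies in a facet through $v$, adjoin one new lattice point, and contradict maximality---is the same as the paper's, but your specific construction has genuine gaps. The central one is a mismatch between what you impose on $\delta$ and what you then use: you require $\langle n_i,\delta\rangle\le 0$ only for the facets $F_i\ne F$ that \emph{contain} $u$, yet you conclude that $w=u+\delta$ lies in the cylinder cut out by \emph{all} facet inequalities other than that of $F$. For a facet $F_j$ with $u\notin F_j$ you have no control over $\langle n_j,\delta\rangle$, so $w$ may violate that inequality; then $F_j$ is also visible from $w$, and since $F_j$ may contain $v$ it may have lattice points in its relative interior, which would become interior points of $\hat Q$. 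Strengthening the requirement to all facets $F_i\ne F$ does not obviously repair this: the recession cone of the cylinder can be $\{0\}$ (drop one facet inequality of the octahedron; the remaining seven normals still positively span $\R^3$), so a nonzero feasible $\delta$ need not exist. Relatedly, the existence of $\delta$ is not established by ``a Hilbert-basis argument together with the primitivity of $n$'': a full-dimensional rational cone meeting $\{\langle n,\cdot\rangle>0\}$ need not contain a lattice point at height exactly one even for primitive $n$ (in the plane, the cone spanned by $(3N+1,3)$ and $(3N+2,3)$ with $n=(0,1)$ has no lattice point at height $1$). What saves your particular cone is that it contains the $(d-1)$-dimensional cone $\R^{+}(F-u)$ inside $\{\langle n,\cdot\rangle=0\}$; you would need to say this, and the fallback of ``iterating over vertices and facets'' is not an argument.

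A second gap is the empty case. Even when $w$ does lie in the cylinder, old vertices of $Q$ can cease to be vertices of $\hat Q$: with $Q=[0,1]^2$, $F$ the top edge, $u=(0,1)$ and $\delta=(0,1)$ (which satisfies all your conditions), the point $u$ lands on the segment from $(0,0)$ to $w=(0,2)$ and is no longer a vertex, so $\hat Q$ is not empty even though it acquired no new non-vertex lattice point; ``a small perturbation of supporting functionals'' does not address this. The paper sidesteps all of these issues by working with the vertex cone $C_v=v+\R^{+}(Q-v)$ at $v$ itself: under the contradiction hypothesis every lattice point of $Q$ lies on $\partial C_v$, and one adjoins a lattice point $u$ of the interior of $C_v$ chosen to minimize a supporting functional of $C_v$. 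Then $u$ automatically lies outside $Q$, it is the only lattice point of $\conv(Q\cup\{u\})$ in the interior of $C_v$ (which contains the interior of the enlarged polytope), and every old vertex remains a vertex because it lies on a facet hyperplane of $C_v$ that strictly separates it from $u$. I recommend reworking your argument along these lines rather than trying to patch the cylinder construction.
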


\begin{proof}
Let $v$ be a vertex of $Q$ and suppose that every lattice point of $Q$ is in a facet containing $v$. We claim that this contradicts $Q$ being hollow-maximal or empty-maximal.
For this, let $C_v=v + \R_{\ge 0}(Q-v)$ be the cone of $Q$ at $v$, then all the lattice points of $Q$ lie in the boundary of the cone. Let $u\in \text{int}(C_v)\cap \Z^d$ be such that $u$ is the only lattice point of $Q':=\conv(Q,u)$ in the interior of $C_v$. 
(Such a $u$ can be found, for example, minimizing in $\text{int}(C_v)\cap \Z^d$ any supporting linear functional of $C_v$). Then $Q'$ strictly contains $Q$ and it is still empty or hollow if $Q$ was empty or hollow, respectively (see Figure~\ref{fig:maximal_nonsimplex}).
\end{proof}

\begin{figure}[htb]
\centerline{\includegraphics{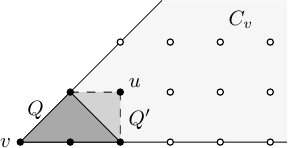}}
\caption{Finding vertices $u$ and $v$ not contained in a common facet in the proof of Lemma~\ref{lemma:max_hollow_or_empty}.}
\label{fig:maximal_nonsimplex}
\end{figure}

With this we can now prove that $w^\infty(d)$ is at least $\max\{w_E(d-1),w_H(d-2)\}$.

\begin{corollary}
\label{coro:infinite_nonsimplex}
For every $d\ge 3$ there exists an empty $(d-1)$-polytope of width $w_E(d-1)$ with infinitely many lifts of bounded size. 
In particular, $w^\infty(d) \ge w_E(d-1)$. 
\end{corollary}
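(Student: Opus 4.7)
The plan is to produce an empty, non-simplex $(d-1)$-polytope of width $w_E(d-1)$ and then invoke Corollary~\ref{coro:holds-empty-nonsimplex}. I would start from any empty $(d-1)$-polytope $Q_0$ realizing the maximum width $w_E(d-1)$. Since the volume of a hollow polytope is bounded by Hensley's theorem, only finitely many empty $(d-1)$-polytopes can contain $Q_0$, so I can enlarge $Q_0$ to an inclusion-maximal empty $(d-1)$-polytope $Q$. Enlargement never decreases width, and $w_E(d-1)$ is by definition the maximum width of an empty $(d-1)$-polytope, so $\width(Q) = w_E(d-1)$.

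The core step is to show that this maximal $Q$ cannot be a simplex. Since $\dim Q = d-1 \ge 2$, I can apply Lemma~\ref{lemma:max_hollow_or_empty}: for every vertex $v$ of $Q$ there is a lattice point $u \in Q$ not contained in any facet through $v$, and emptiness of $Q$ forces this $u$ to be a vertex. If $Q$ were a simplex with vertices $v_0, \ldots, v_{d-1}$, then for $v = v_0$ every other vertex $v_k$ would lie in the facet $\conv\{v_j : j \neq i\}$ for any $i \notin \{0,k\}$, and such an $i$ exists precisely because $d-1 \ge 2$. That facet contains $v_0$, which contradicts the conclusion of the lemma. Hence $Q$ is hollow, empty, and not a simplex.

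With this $Q$ in hand, Corollary~\ref{coro:holds-empty-nonsimplex} supplies infinitely many lifts of $Q$ of fixed size $\size(Q)$ and width $w_E(d-1)$; this proves the existence statement and, via Corollary~\ref{coro:min=max}, gives $w^\infty(d) \ge w_E(d-1)$. The only real difficulty in the plan is ruling out the simplex case, which is exactly what Lemma~\ref{lemma:max_hollow_or_empty} was designed for, and it is also the only place where the hypothesis $d \ge 3$ of the corollary actually gets used.
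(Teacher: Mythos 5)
Your overall strategy is the paper's: obtain an empty non-simplex of width $w_E(d-1)$ via Lemma~\ref{lemma:max_hollow_or_empty} and then invoke Corollary~\ref{coro:holds-empty-nonsimplex}. The core step --- showing that an inclusion-maximal empty polytope of dimension $\ge 2$ cannot be a simplex, because every lattice point of a simplex with at least three vertices lies in a facet through any prescribed vertex --- is exactly right, and it is the content the paper compresses into one line. However, there is a genuine gap in how you produce the maximal polytope. You justify the ascent from $Q_0$ to an inclusion-maximal empty $Q$ by claiming that ``the volume of a hollow polytope is bounded by Hensley's theorem.'' That is the opposite of what Hensley proves: his theorem (as cited in Lemma~\ref{lemma:finite-nonprojecting}) bounds the volume of \emph{non-hollow} polytopes of a given size. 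Hollow, and even empty, polytopes have unbounded volume --- the Reeve tetrahedra are empty $3$-polytopes of arbitrarily large volume --- so the finiteness of empty polytopes containing $Q_0$ does not follow from the reason you give, and the termination of your chain of enlargements is unjustified as written. (Compare also the situation for hollow polytopes, where a unit square is contained in the hollow boxes $[0,1]\times[0,n]$ for all $n$ and hence in \emph{no} inclusion-maximal hollow polygon; so appeals to maximality here genuinely require care.)

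The gap is repairable, and the repair shows you do not need full inclusion-maximality at all. If $Q_0$ is already not a simplex, you are done. If $Q_0$ is a simplex, then (by your own computation) every lattice point of $Q_0$ lies in a facet containing a fixed vertex $v$, and the \emph{proof} of Lemma~\ref{lemma:max_hollow_or_empty} then produces a lattice point $u$ in the interior of the cone $C_v$ such that $Q_1:=\conv(Q_0\cup\{u\})$ is still empty and strictly larger. Since $Q_1$ is a $(d-1)$-polytope with at least $d+1$ vertices, it is not a simplex; it contains $Q_0$, so $\width(Q_1)\ge w_E(d-1)$, and it is empty, so $\width(Q_1)= w_E(d-1)$. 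A single extension step therefore suffices, and the rest of your argument (Corollary~\ref{coro:holds-empty-nonsimplex} plus the definition of $w^\infty(d,n)$) goes through unchanged.
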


\begin{proof}
Lemma~\ref{lemma:max_hollow_or_empty} implies that $w_E(d-1)$ is achieved by a non-simplex $Q$, and then Corollary~\ref{coro:holds-empty-nonsimplex} shows $Q$ has infinitely many lifts of bounded size. Corollary~\ref{coro:one_hollow_Q} implies then that $w^\infty(d)\ge\width(Q)=w_E(d-1)$.
\end{proof}

We call a polytope \emph{bipyramid}, if there are two vertices $u$ and $v$ such that every facet is a pyramid with apex either $u$ or $v$, and there is no facet containing both. Hollow bipyramids clearly satisfy the conditions of Lemma~\ref{lemma:infinite-hollow+(semi)empty}, hence they have infinitely many lifts of bounded size.

\begin{lemma}
\label{lemma:bipyramids}
For every $d\ge 2$ there exists a hollow bipyramid of dimension $d$ and width $w_H(d-1)$.
\end{lemma}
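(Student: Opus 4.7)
The plan is to exhibit an explicit hollow $d$-bipyramid of width $w_H(d-1)$. For $d=2$ the unit square $[0,1]^2$, viewed as a quadrilateral with two opposite vertices chosen as apices, is a hollow $2$-bipyramid of width $1 = w_H(1)$. For $d\ge 3$ I take the $(d{-}1)$-fold dilation $\sigma := (d-1)\Delta_{d-1}$ of the unimodular simplex, embed it in the hyperplane $\{x_d = 0\} \subset \R^d$, and set
\[
B := \conv\!\bigl(\sigma \cup \{u, v\}\bigr), \qquad u := (\mathbf{1}_{d-1},\, d-2), \qquad v := (\mathbf{0}_{d-1},\, -1).
\]
Since $\sigma$ is itself hollow of width $d-1$, this matches the lower bound $w_H(d-1)\ge d-1$, which is attained with equality in the range of interest (in particular for $d\le 4$).

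To check $B$ is a bipyramid, I would verify that its facets are exactly $\conv(F \cup \{u\})$ and $\conv(F \cup \{v\})$ as $F$ ranges over the facets of $\sigma$. Each such $F$ is either a coordinate hyperplane section or the section by $\{\sum_i x_i = d-1\}$; a short linear computation produces the hyperplane through $F\cup\{u\}$ and verifies that $v$ does not satisfy its equation (and symmetrically). That yields $2d$ facets, each containing exactly one apex.

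For hollowness, the only integer heights in $B$ are $x_d \in \{-1, 0, 1, \ldots, d-2\}$. The top and bottom slices are the single apices; the slice at $x_d=0$ equals $\sigma$ (the ``apex-contribution point'' $\tfrac{1}{d-1}\mathbf{1}$ lies in $\sigma$, so the slice is not enlarged) and hence contains only boundary lattice points. For an intermediate height $k\in\{1,\ldots,d-3\}$, parameterising the convex-combination constraints presents the slice as $\{\,s(r - \tfrac{1}{d-1}\mathbf{1}) + \tfrac{1+k}{d-1}\mathbf{1} : r \in \sigma,\ s \in [0, \tfrac{d-2-k}{d-2}]\,\}$; a lattice $y \in \Z^{d-1}$ in this set must obey $y_i \ge 1$ and $\sum_i y_i \le d-1$, forcing $y = \mathbf{1}$, which corresponds to $r=\mathbf{1}\in\partial\sigma$ and so lies on $\partial B$.

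For the width, $\width_{e_d}(B) = (d-2) - (-1) = d-1$, giving the upper bound. Conversely, fix a nonzero $\ell = (\ell_1, \ldots, \ell_{d-1}, c) \in (\Z^d)^*$: if some $\ell_i \ne 0$ with $i < d$, then the values of $\ell$ at $0$ and $(d-1)e_i$ differ by $(d-1)|\ell_i| \ge d-1$; otherwise $\ell_i = 0$ for all $i<d$, forcing $c\ne 0$, and $\ell(u) - \ell(v) = (d-1)c$, giving $\width_\ell(B) \ge d-1$. The hard part I expect is the case, open for $d\ge 5$, where $w_H(d-1) > d-1$: the construction above only realises width $d-1$. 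The natural fix is to replace $\sigma$ by an actual width-maximising hollow $(d-1)$-polytope $Q'$ and to choose lattice apices above and below $\partial Q'$ whose heights sum to $w_H(d-1)$; verifying hollowness of the intermediate integer slices then reduces to a combinatorial condition on $Q'$ and its boundary lattice points, for which Lemma~\ref{lemma:max_hollow_or_empty} (giving, at each vertex of a maximal hollow polytope, a lattice point avoided by all incident facets) should furnish the ingredients.
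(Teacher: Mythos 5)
Your explicit construction (apices at heights $d-2$ and $-1$ over the lattice points $\mathbf{1}$ and $\mathbf{0}$ of $(d-1)\Delta_{d-1}$, which lie on no common facet) does appear to give a hollow $d$-bipyramid of width $d-1$, and its general shape --- lift two lattice points of a hollow base that share no facet, one up and one down --- is exactly the paper's. But as you yourself note, this only proves the lemma when $w_H(d-1)=d-1$, i.e.\ for $d\le 4$; for $d\ge 5$ the statement requires width $w_H(d-1)$, which may exceed $d-1$, and there the burden of proof lies entirely in your final ``natural fix,'' which has a genuine gap. You propose to take a width-maximising hollow $(d-1)$-polytope $Q'$ and extract the two apex points via Lemma~\ref{lemma:max_hollow_or_empty}; but that lemma applies only to \emph{maximal} hollow polytopes, and the existence of a maximal hollow $(d-1)$-polytope of width $w_H(d-1)$ is precisely the non-trivial point. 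A hollow polytope of maximal width need not be contained in any inclusion-maximal hollow polytope (compare the width-one and width-two families in Treutlein's trichotomy, which are not contained in maximal ones), so you cannot simply ``saturate'' $Q'$.

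The paper closes this gap with an induction on $d$ and a case distinction. If $w_H(d-1)>w_H(d-2)$, then no hollow $(d-1)$-polytope of width $w_H(d-1)$ projects to a hollow $(d-2)$-polytope, so by Nill--Ziegler (Lemma~\ref{lemma:finite-nonprojecting}) there are only finitely many of them; hence one of them is maximal and Lemma~\ref{lemma:max_hollow_or_empty} yields the two required lattice points. If instead $w_H(d-1)=w_H(d-2)$, the inductive hypothesis supplies a hollow \emph{bipyramid} of dimension $d-1$ and width $w_H(d-2)$, whose two apices already share no facet. In either case one then lifts the two points to heights $\pm h$ and invokes Theorem~\ref{theorem:same_width} to see the width is preserved for $h$ large (hollowness is automatic because every lattice point of the lift sits over a boundary lattice point of the hollow base, hence on a vertical facet). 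Neither the finiteness argument nor the inductive fallback appears in your sketch, and without one of them the general case does not go through. A smaller quibble: your phrase ``apices whose heights sum to $w_H(d-1)$'' is off --- the heights must be taken large to \emph{preserve} the width of the base, not chosen to produce the width in the vertical direction.
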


\begin{proof} 
By induction on $d$. For $d=2$, the unit square is a hollow bipyramid of width $1=w_H(1)$. For higher $d$,
let us first see that there exists a hollow $(d-1)$-polytope $Q$ of width $w_H(d-1)$ and having two lattice points $u$ and $v$ not sharing any facet. 
\begin{itemize}
\item If $w_H(d-1)=w_H(d-2)$ then let $Q$ be a hollow bipyramid of dimension $d-1$ and width $w_H(d-2)$, which exists by induction hypothesis.
\item If $w_H(d-1)>w_H(d-2)$ then there are finitely many hollow $(d-1)$-polytopes of width $w_H(d-1)$\cite{NillZiegler}, hence there is one such $Q$ that is maximal. By Lemma \ref{lemma:max_hollow_or_empty}, there are lattice points $u$ and $v$ in $Q$ not contained in the same facet.
\end{itemize}
Now consider the convex hull of $\left( Q\times \{0\} \right) \cup\{(u,h), (v,-h)\}$. This is a hollow bipyramid of dimension $d$ and, for sufficiently large $h$, it has the same width of $Q$ (by Theorem~\ref{theorem:same_width}).
\end{proof}

\begin{corollary}
\label{coro:infinite_bipyramid}
For every $d\ge 3$ there exists a hollow $(d-1)$-polytope of width $w_H(d-2)$ with infinitely many lifts of bounded size. 
In particular, $w^\infty(d) \ge w_H(d-2)$. 
\end{corollary}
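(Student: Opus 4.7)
The plan is to directly chain the two preceding results: Lemma~\ref{lemma:bipyramids} produces a suitable hollow $(d-1)$-polytope, and Lemma~\ref{lemma:infinite-hollow+(semi)empty} manufactures infinitely many bounded-size lifts of it. Concretely, I would apply Lemma~\ref{lemma:bipyramids} with the parameter $d$ replaced by $d-1$ (legal since $d\ge 3$) to obtain a hollow bipyramid $Q$ of dimension $d-1$ and width $w_H(d-2)$, with two apices $u$ and $v$ such that every facet of $Q$ is a pyramid with apex $u$ or $v$ and no facet contains both.

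Next I would verify that the pair $(Q,v)$ satisfies the hypotheses of Lemma~\ref{lemma:infinite-hollow+(semi)empty}. Since no facet of $Q$ contains both $u$ and $v$, the vertex $u$ and the base vertices lie in $Q':=\conv(\ver(Q)\setminus\{v\})$ and span a $(d-1)$-dimensional pyramid with apex $u$, so $Q$ is not a pyramid with apex $v$. Moreover, a proper face $F$ of $Q$ containing $v$ must sit inside some facet with apex $v$; such a facet has the form $\conv(B\cup\{v\})$ for a face $B$ of the common base, so $F=\conv(B'\cup\{v\})$ for some face $B'$ of $B$, which is a pyramid with apex $v$. Hence every proper face of $Q$ through $v$ is either a pyramid with apex $v$ or (in the degenerate case $F=\{v\}$) trivially hollow.

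Lemma~\ref{lemma:infinite-hollow+(semi)empty} then gives an infinite family of tight lifts $P(h)=\conv\bigl((Q'\times\{0\})\cup\{(v,h)\}\bigr)$, indexed by sufficiently large $h\in\Z$ in an appropriate arithmetic progression, all of size $\size(Q)$ and all of width $\width(Q)=w_H(d-2)$. Since these $P(h)$ are pairwise non-equivalent (their normalized volumes are proportional to $|h|$), $Q$ has infinitely many lifts of bounded size, and by Corollary~\ref{coro:min=max} we conclude $w^\infty(d)\ge w^\infty(d,\size(Q))\ge w_H(d-2)$.

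There is essentially no serious obstacle here beyond the routine verification of the face hypothesis for a bipyramid; the content of the corollary is packaged almost entirely into Lemmas~\ref{lemma:infinite-hollow+(semi)empty} and~\ref{lemma:bipyramids}, and the work of the preceding section (notably Theorem~\ref{theorem:same_width} and Corollary~\ref{coro:tight}) is what makes the final "width is preserved and lifts are abundant" conclusion trivial from those inputs.
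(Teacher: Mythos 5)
Your proof is correct and follows essentially the same route as the paper: obtain a hollow $(d-1)$-dimensional bipyramid of width $w_H(d-2)$ from Lemma~\ref{lemma:bipyramids} and then feed it to Lemma~\ref{lemma:infinite-hollow+(semi)empty}, whose hypotheses hollow bipyramids satisfy (the paper states this just before Lemma~\ref{lemma:bipyramids}, while its one-line proof of the corollary miscites Lemma~\ref{lemma:max_hollow_or_empty} and Corollary~\ref{coro:holds-empty-nonsimplex} in place of the results you correctly invoke). Your explicit verification of the face condition for the bipyramid is exactly the "clearly" the paper leaves implicit.
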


\begin{proof}
Let $Q$ be a hollow $(d-1)$-dimensional bipyramid of width $w_H(d-2)$, which exists by Lemma~\ref{lemma:bipyramids}. 
Lemma~\ref{lemma:infinite-hollow+(semi)empty} shows $Q$ has infinitely many lifts of bounded size. 
Corollary~\ref{coro:one_hollow_Q} implies then that $w^\infty(d)\ge\width(Q)=w_H(d-2)$.
\end{proof}

This finally allows us to prove that:

\begin{theorem}
\label{thm:one_hollow_Q}
For all $d\ge 3$, 
$w^\infty(d)$ equals the maximum width of a lattice $(d-1)$-polytope $Q$ that admits infinitely many lifts of bounded size. Moreover, $Q$ is hollow.
\end{theorem}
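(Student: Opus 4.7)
The easy direction of the theorem, $W^* \le w^\infty(d)$ (writing $W^*$ for the maximum in question), follows directly from Theorem~\ref{theorem:same_width}: if a lattice $(d-1)$-polytope $Q$ admits infinitely many $d$-dimensional lifts of fixed size $n$, then all but finitely many of them have width $\width(Q)$, yielding infinitely many $d$-polytopes of size $n$ and that width, whence $w^\infty(d) \ge \width(Q)$. Taking the supremum over such $Q$ gives $w^\infty(d) \ge W^*$.

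For the non-trivial direction, I would set $W := w^\infty(d)$ and apply Corollary~\ref{coro:min=max} to obtain an infinite family $\{P_i\}_{i\in\N}$ of lattice $d$-polytopes of common size $n$ and width $W$. Lemma~\ref{lemma:finite-nonprojecting} allows us, after discarding finitely many $P_i$, to assume each $P_i$ is hollow and projects onto a hollow $(d-1)$-polytope $Q_i$; since projections do not increase width and $P_i$ is a lift of $Q_i$, Theorem~\ref{theorem:same_width} forces $\width(Q_i) = W$. If the family $\{Q_i\}$ consists of only finitely many unimodular classes, the pigeonhole principle supplies a single $Q^*$ receiving infinitely many $P_i$ as lifts, completing this case.

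The main obstacle is the alternative case, in which the $Q_i$ comprise infinitely many inequivalent hollow $(d-1)$-polytopes of width $W$ and size bounded by $n$. I would handle it by iterating Nill--Ziegler~\cite{NillZiegler} downward: setting $A_1 := \{Q_i\}$ and discarding the finitely many Nill--Ziegler exceptions, each element of $A_1$ projects onto a hollow $(d-2)$-polytope, and collecting these projections into $A_2$ and iterating produces a sequence of families $A_j$ of hollow polytopes in dimensions $d-1, d-2, \ldots$. The iteration must terminate at some level $k \ge 2$ where $A_{j_0}$ consists of finitely many equivalence classes --- level $k = 2$ is the worst case, since Pick's formula forces hollow lattice polygons of size at most $n$ to have a fixed area and Lagarias--Ziegler~\cite{LagariasZiegler} then yields only finitely many classes. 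Pigeonhole at that terminal step produces a hollow $k$-polytope $S^*$ receiving infinitely many projections, and chasing the tower upward shows that infinitely many of the original $Q_i$ admit a (composed) projection to $S^*$. If $\width(S^*)$ were strictly less than $W$, any integer functional $\ell$ on $\R^k$ witnessing this could be pulled back to a nonzero integer functional on $\R^{d-1}$ satisfying $\width_\ell(Q_i) = \width_\ell(S^*) < W$, contradicting $\width(Q_i) = W$. Hence $\width(S^*) = W$, so $W \le w_H(k) \le w_H(d-2)$; combining with $w_H(d-2) \le w^\infty(d) = W$ from Corollary~\ref{coro:infinite_bipyramid} forces $W = w_H(d-2)$, and the same Corollary~\ref{coro:infinite_bipyramid} then exhibits an explicit hollow $(d-1)$-dimensional bipyramid $Q^*$ of width $W$ with infinitely many lifts of bounded size, finishing this direction.

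The moreover clause, that every such $Q^*$ is hollow, follows from the observation that an interior lattice point $q^\circ$ of $Q^*$ would restrict the fiber above $q^\circ$ in each hollow lift (all but finitely many, by Lemma~\ref{lemma:finite-nonhollow}) to length at most one; a standard convexity argument using concavity of the upper envelope and convexity of the lower envelope of each lift then bounds the vertical extent uniformly in terms of $Q^*$, and Lagarias--Ziegler's finiteness result yields only finitely many equivalence classes of hollow lifts, contradicting the assumed infinity.
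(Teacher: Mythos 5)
Your argument follows essentially the same route as the paper's: the easy inequality via Theorem~\ref{theorem:same_width}, and for the converse a reduction through Lemma~\ref{lemma:finite-nonprojecting} to hollow projections $Q_i$ of the $P_i$, a pigeonhole when $\{Q_i\}$ falls into finitely many classes, and an appeal to the hollow bipyramid of Corollary~\ref{coro:infinite_bipyramid} in the remaining case. The paper merely organizes the case split differently (on whether $W>w_H(d-2)$, in which case no $Q_i$ can project to a hollow $(d-2)$-polytope and Nill--Ziegler forces $\{Q_i\}$ to be finite outright); the two organizations are interchangeable. One small imprecision: at the point where you introduce the $Q_i$ you only get $\width(Q_i)\ge W$, not equality --- a particular lift may have width strictly smaller than its projection --- but since you only use the inequality to derive contradictions, and equality for the final $Q^*$ follows from Theorem~\ref{theorem:same_width} once it has infinitely many lifts of width $W$, nothing is lost. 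Your treatment of the ``moreover'' clause is exactly the content of Corollary~\ref{coro:finite-polytope-interiorpoint} and is fine.

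There is, however, one genuinely incorrect step: the termination claim for your tower of projections. You assert that the hollow polygons reached after $d-3$ steps have size at most $n$, so that Pick plus Lagarias--Ziegler forces finitely many classes. But lattice projections can \emph{increase} the number of lattice points (the primitive segment $\conv\{(0,0),(N,1)\}$ has size $2$ yet projects onto $[0,N]$), so no size bound propagates down the tower; and without a size bound the hollow polygons form infinitely many classes (e.g.\ $[0,N]\times[0,1]$), so the tower need not terminate at level $2$ for the reason you give. Fortunately the tower is superfluous: already its first step gives, for all but finitely many $i$, a hollow $(d-2)$-polytope $S_i$ onto which $Q_i$ projects, whence $W\le\width(Q_i)\le\width(S_i)\le w_H(d-2)$; combined with $w_H(d-2)\le w^\infty(d)=W$ this yields $W=w_H(d-2)$, and Corollary~\ref{coro:infinite_bipyramid} finishes the argument exactly as you describe. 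With that one claim excised, the proof is correct.
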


\begin{proof}
That $w^\infty(d)$ is at least the width of any lattice $(d-1)$-polytope with infinitely many lifts of bounded size is Corollary~\ref{coro:one_hollow_Q}.

For the other inequality, Corollary~\ref{coro:infinite_bipyramid} proves the statement in the case when $w^\infty(d)=w_H(d-2)$ (it proves as well that $w^\infty(d) \ge w_H(d-2)$).

The only remaining case is then when $w^\infty(d) > w_H(d-2)$.
First of all, since $w^\infty(3)=1$ and by Proposition~\ref{proposition:monotone_d}, we have that $w^\infty(d)>0$ for all $d\ge 3$ (this guarantees the existence of infinitely many lattice $d$-polytopes of some fixed size).
Let $n$ be such that $W:=w^\infty(d) = w^\infty(d,n)$.
That is, there is an infinite family $\{P_i\}_{i\in \N}$ of lattice $d$-polytopes of size $n$ and width $W$.
Without loss of generality (Lemma~\ref{lemma:finite-nonprojecting}) assume all $P_i$'s are hollow and have a hollow lattice $(d-1)$-dimensional projection $Q_i$. 
Since projecting does not decrease the width, every $Q_i$ has width at least $W$, and since $W=w^\infty(d) > w_H(d-2)$ no $Q_i$ admits a hollow projection to dimension $d-2$.
This implies the family $\{Q_i\}_{i\in \N}$ to be finite, so one of them, call it $Q$, lifts to infinitely many members of the family $\{P_i\}_{i\in \N}$.
Theorem~\ref{theorem:same_width} implies then that $Q$ has width \emph{exactly} $W$.

Any $Q$ with infinitely many lifts of bounded size is hollow, by Lemma~\ref{lemma:finite-nonprojecting} and Corollary~\ref{coro:equiv_statements}.
\end{proof}


\section{Polytopes with finitely many lifts of bounded size}
\label{sec:finitely-hollow-lifts}

\begin{lemma}
\label{lemma:finite-pyramid}
Let $Q$ be a lattice polytope that is a pyramid with base $F$. 
If $F$ has finitely many lifts of bounded size, then so does $Q$.
\end{lemma}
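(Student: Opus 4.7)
The plan is to reduce to tight lifts and then analyze the equivalence classes of the ``base'' over $F$ and of the apex over $v$ separately.

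By Corollary~\ref{coro:tight}, it suffices to bound, for each $n\in\N$, the number of equivalence classes of tight lifts of $Q$ of size at most $n$. Since $Q=\conv(F\cup\{v\})$ is a pyramid and $\ver(Q)=\ver(F)\cup\{v\}$, every tight lift of $Q$ has the form
\[
P=\conv\bigl(\{(w,h_w):w\in\ver(F)\}\cup\{(v,h_v)\}\bigr),
\]
which is itself a pyramid over
\[
P_F:=\conv\{(w,h_w):w\in\ver(F)\}
\]
with apex $(v,h_v)$. Since $F$ is a facet of $Q$ not containing $v$, pulling back through $\pi$ a supporting functional of $F$ exhibits $P_F$ as a face of $P$; in particular $\size(P_F)\le \size(P)\le n$. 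After a unimodular change of coordinates on $\R^d$ identifying $\operatorname{aff}(P_F)$ with $\R^{d-1}$ (and $\operatorname{aff}(F)$ with $\R^{d-2}$), $P_F$ becomes a tight lift of $F$ in the sense of Definition~\ref{defi:lift}, so the hypothesis on $F$ leaves only finitely many equivalence classes for $P_F$ of size at most $n$.

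To count the extensions of a fixed representative $P_F$, let $\ell_F:\R^{d-1}\to\R$ be the primitive integer affine functional vanishing on $F$ and positive at $v$, and set $D:=\ell_F(v)\in\Z_{>0}$ (the lattice distance from $v$ to $\operatorname{aff}(F)$). For each $k\in\Z$, the map
\[
g_k:(x,t)\longmapsto (x,\,t+k\,\ell_F(x))
\]
is a unimodular transformation of $\R^d$ commuting with $\pi$. Because $\pi(P_F)=F$ and $\ell_F$ vanishes on $F$, $g_k$ fixes $P_F$ pointwise while sending $(v,h_v)$ to $(v,h_v+kD)$. Hence two extensions of $P_F$ whose apex heights agree modulo $D$ are equivalent as lifts of $Q$, so at most $D$ equivalence classes of tight lifts extend each $P_F$. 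Multiplying by the finite count of $P_F$'s then gives the claimed finiteness.

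The one point that requires real care is verifying that $P_F$, which a priori lives in a $(d-1)$-dimensional affine subspace of $\R^d$ rather than in $\R^{d-1}$ itself, really is a lift of $F$ to which the hypothesis applies. This is a routine unimodular change of coordinates (extending a lattice basis of $\operatorname{aff}(P_F)\cap\Z^d$ to one of $\Z^d$), and I expect it to be the only bookkeeping step that needs to be written out with care.
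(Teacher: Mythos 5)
Your proof is correct and follows essentially the same route as the paper's: decompose a tight lift of $Q$ as a pyramid over a tight lift of $F$ with apex in the fiber over $v$, invoke the hypothesis on $F$ for the base, and use the vertical shearing by multiples of the functional defining $\operatorname{aff}(F)$ to show that only the apex height modulo the lattice distance $D$ matters. The paper leaves that shearing transformation to the reader, whereas you write it out explicitly as $g_k$; otherwise the arguments coincide.
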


\begin{proof}
Let $Q\subset \R^{d-1}$ be lattice $(d-1)$-polytope that is a pyramid with base $F$ and apex $v$. 
Any tight lift of $Q$ is of the form $P(\tilde F,h):=\conv(\tilde F 
\cup \{\tilde v\})$, where $\tilde F$ is a tight lift of $F$ and
$\tilde v=(v,h)$ is a point in the fiber of $v$.
Since $\tilde F$ is contained in some hyperplane $H$ orthogonal to
$\{x_d=0\}$ and containing $F\times\{0\}$, $P(\tilde F,h)$ is a 
pyramid with base $\tilde F$ and apex $\tilde v$ (see Figure~\ref{fig:pyramid}).

Let $m$ be the distance from $v$ to
$F$, $P(\tilde F,h)$ is equivalent to $P(\tilde F,h+m)$ for
all $h\in \Z$ (we leave it to the reader to derive the unimodular
transformation). That is, there are at most $m$ values of $h$ that
give non-equivalent tight lifts $P(\tilde F,h)$, for any fixed
$\tilde F$. By hypothesis, there are only finitely
many such $\tilde F$ of bounded size, hence finitely many
tight lifts of bounded size of $Q$. Corollary~\ref{coro:equiv_statements} implies the statement.
\end{proof}
\begin{figure}[htb]
\centerline{\includegraphics{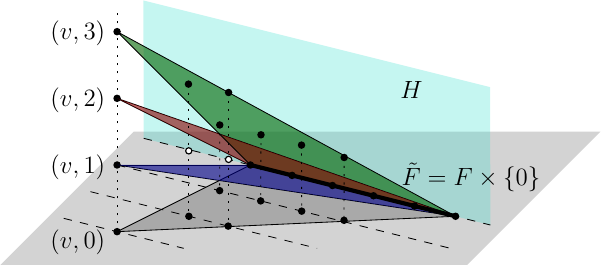}}
\caption{The setting of the proof of Lemma~\ref{lemma:finite-pyramid}. In the figure, the case when $\tilde F$ is the tight lift $F \times \{0\}$ is represented. The apex $v$ is at distance $3$, hence $(v,3)$ yields equivalent lift as $(v,0)$, while $(v,1)$ and $(v,2)$ do not.}
\label{fig:pyramid}
\end{figure}

\begin{corollary}
\label{coro:finite_simplex}
Lattice simplices have only finitely many
lifts of bounded size.
\end{corollary}

\begin{proof}
Using induction on the dimension and Lemma~\ref{lemma:finite-pyramid},
this follows from the fact that a single lattice point has only finitely many lifts of bounded size.
\end{proof}

We now want to show that non-hollow lattice polytopes have only finitely many lifts of bounded size. The following geometric lemma (in which $Q$ need not be a lattice polytope) will be helpful.

\begin{lemma}
\label{lemma:volume_non_hollow}
Let $\pi:\R^d\to \R^{d-1}$ be the standard projection that forgets the last coordinate.
Let $q$ be a point in the interior of a $(d-1)$-polytope $Q$. Then, there is a $c\in \R$ such that for every $d$-polytope $P\subset \R^d$ with $\pi (P)=Q$ we have
\[
{\operatorname{vol} (P)} \le c\cdot  {\operatorname{length} (P\cap \pi^{-1} (q))}.
\]
\end{lemma}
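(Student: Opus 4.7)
The plan is to use Fubini: decompose the volume of $P$ along its fibers over $Q$, and then bound the integrand uniformly in terms of the fiber length over $q$. Concretely, define $f:Q\to\R_{\geq 0}$ by $f(x):=\operatorname{length}(P\cap\pi^{-1}(x))$, so that
\[
\operatorname{vol}(P)\;=\;\int_Q f(x)\,dx.
\]
The key observation is that $f$ is a \emph{concave} function on $Q$. This follows from the convexity of $P$: if $x_1,x_2\in Q$ and $t\in[0,1]$, any two points in the fibers $P\cap\pi^{-1}(x_i)$ can be connected by segments lying in $P$, and these segments sweep out a subset of $P\cap\pi^{-1}(tx_1+(1-t)x_2)$ of length at least $tf(x_1)+(1-t)f(x_2)$.

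Next I would exploit that $q$ lies in the \emph{interior} of $Q$ to turn concavity into a uniform pointwise bound $f(x)\leq M\cdot f(q)$ valid for every $x\in Q$, with $M$ depending only on $Q$ and $q$. The argument is geometric: for each $x\in Q\setminus\{q\}$, prolong the segment from $x$ through $q$ until it meets $\partial Q$ at some point $y$, so $q=\lambda x+(1-\lambda)y$ for some $\lambda=\lambda(x)\in(0,1]$. Concavity of $f$ and the fact that $f\ge 0$ give
\[
f(q)\;\geq\;\lambda f(x)+(1-\lambda)f(y)\;\geq\;\lambda f(x).
\]
Because $q$ is in the interior of $Q$, the ratio $\lambda(x)=\operatorname{dist}(y,q)/\operatorname{dist}(y,x)$ admits a uniform lower bound $\lambda_0>0$ depending only on $\operatorname{dist}(q,\partial Q)$ and $\operatorname{diam}(Q)$. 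Hence $f(x)\leq f(q)/\lambda_0$ on all of $Q$.

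Integrating this over $Q$ yields
\[
\operatorname{vol}(P)\;=\;\int_Q f(x)\,dx\;\leq\;\frac{\operatorname{vol}(Q)}{\lambda_0}\cdot f(q),
\]
so the lemma holds with $c:=\operatorname{vol}(Q)/\lambda_0$, which depends only on $Q$ and $q$ as required. No step is really a serious obstacle: the only point that needs a little care is verifying concavity of $f$ (which is standard but worth stating cleanly), and making sure $\lambda_0$ is bounded away from $0$ uniformly, which is immediate from $q\in\operatorname{int}(Q)$ and the boundedness of $Q$.
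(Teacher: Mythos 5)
Your proof is correct, but it follows a genuinely different route from the paper's. You decompose $\operatorname{vol}(P)=\int_Q f$ with $f(x)=\operatorname{length}(P\cap\pi^{-1}(x))$, use the (standard, correctly justified) concavity of $f$, and convert concavity plus $q\in\operatorname{int}(Q)$ into the uniform pointwise bound $f(x)\le f(q)/\lambda_0$ with $\lambda_0=\operatorname{dist}(q,\partial Q)/\operatorname{diam}(Q)$; this yields the fully explicit constant $c=\operatorname{vol}(Q)/\lambda_0$. The paper instead normalizes the fiber over $q$ to be the unit segment from $(q,0)$ to $(q,1)$, traps $P$ between two supporting hyperplanes $f_1(x)\le x_d\le f_2(x)+1$, observes that the enlarged body $R=\pi^{-1}(Q)\cap\{f_1\le x_d\le f_2+1\}$ has volume $\int_Q(1+f_2-f_1)$ depending continuously on the functional $f_1-f_2\in Q^\vee$, and concludes by compactness of the polar $Q^\vee$. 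Both arguments are elementary and use Fubini at bottom; yours has the advantage of producing an effective constant and avoiding the compactness/continuity step, while the paper's reduces the whole family of lifts to a finite-dimensional compact parameter space, which is conceptually close to the polarity arguments used elsewhere in the paper (e.g.\ Lemma~\ref{lemma:finitely_many_functionals}). The only points worth stating a bit more carefully in your write-up are that the degenerate case $f(q)=0$ is harmless (concavity then forces $f\equiv 0$), and that the concavity inequality is applied at the boundary point $y\in\partial Q$ only through $f(y)\ge 0$, which holds trivially.
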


\begin{proof}
Assume without loss of generality that $q$ is the origin and that the vertical segment $P\cap \pi^{-1} (q)$ goes from $(q,0)$ to $(q,1)$. 
This is no loss of generality since the parameter ${{\operatorname{vol} (P)/\operatorname{length} (P\cap \pi^{-1} (q))}}$ does not change by vertical translation or vertical dilation/contraction of $P$. Notice that the polytope $P$ may be rational.
Under these assumptions what we want to show that there is a global upper bound $c$ for the volume of $P$.

By considering respective supporting hyperplanes of $P$ at $(q,0)$ and $(q,1)$ we see that $P$ is contained in the region $f_1(x_1,\dots,x_{d-1}) \le x_d \le f_2(x_1,\dots,x_{d-1}) +1$, for some linear functionals $f_1,f_2\in (\R^{d-1})^*$, and there is no loss of generality in assuming that $P$ actually equals 
the intersection of $\pi^{-1}(Q)$ with that region (see Figure~\ref{fig:slices}). 
Now, for $\pi(P)$ to equal $Q$ we need $f_1 - f_2 \le 1$ on $Q$, which is equivalent to saying that $f_1-f_2$ is in the polar $Q^\vee$ of $Q$. 
The volume of $P$ is a continuous function of the functional $f_1-f_2$. (In fact, it equals the integral in $Q$ of the function $1 + f_2 -f_1$). 
Since the origin is in the interior of $Q$, $Q^\vee$ is compact, and there is a global bound on the volume of $P$.
\begin{figure}[htb]
\centerline{\includegraphics{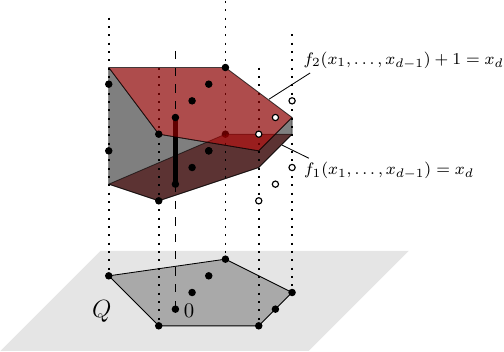}}
\caption{The setting of the proof of Lemma~\ref{lemma:volume_non_hollow}. The figure shows the rational $d$-polytope $\pi^{-1}(Q) \cap \{f_1(x_1,\dots,x_{d-1}) \le x_d \le f_2(x_1,\dots,x_{d-1}) +1\}$.}
\label{fig:slices}
\end{figure}
\end{proof}

\begin{corollary}
\label{coro:finite-polytope-interiorpoint}
A non-hollow lattice polytope has only finitely many lifts of bounded size.
\end{corollary}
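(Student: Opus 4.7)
The plan is to combine the reduction to tight lifts afforded by Corollary~\ref{coro:tight} with the volume bound of Lemma~\ref{lemma:volume_non_hollow}, evaluated at an interior lattice point of $Q$, and then to invoke the classical finiteness of lattice polytopes of bounded volume due to Lagarias and Ziegler.

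Fix a size bound $n\in\N$. Since $Q$ is non-hollow, pick a lattice point $q$ in the interior of $Q$. By Corollary~\ref{coro:tight}, it suffices to bound the number of equivalence classes of tight lifts of $Q$ of size at most $n$. The $(d-1)$-dimensional tight lifts are already finite in number by Lemma~\ref{lemma:finite_same_dim}, so I would focus on the $d$-dimensional ones.

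Let $P$ be any $d$-dimensional tight lift of $Q$. The fiber $P\cap\pi^{-1}(q)$ is a vertical segment $\{q\}\times[a,b]$, and because $q$ is a lattice point and $P$ is a lattice polytope, every integer in $[a,b]$ contributes a lattice point of $P$. Consequently $\size(P)\le n$ forces $b-a\le n+1$, so Lemma~\ref{lemma:volume_non_hollow} applied at $q$ gives $\operatorname{vol}(P)\le c(n+1)$ for some constant $c=c(Q,q)$ independent of $P$. By~\cite[Theorem~2]{LagariasZiegler}, only finitely many unimodular equivalence classes of lattice $d$-polytopes attain such a volume bound.

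To finish I need to observe that each unimodular equivalence class of $P$ gives rise to only finitely many lift-equivalence classes: for any fixed $P$, a lattice projection with image $Q$ is determined by the images of the finitely many vertices of $P$, and each of these must lie in the finite set $Q\cap\Z^{d-1}$. The main (mild) subtlety is precisely this last step, since equivalence of lifts is strictly finer than unimodular equivalence of the underlying polytopes; but once the volume bound is in hand, this is pure bookkeeping.
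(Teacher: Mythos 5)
Your argument is correct and follows essentially the same route as the paper: bound the length of the fiber over an interior lattice point of $Q$ by the size of the lift, convert this into a volume bound via Lemma~\ref{lemma:volume_non_hollow}, and conclude by finiteness of lattice $d$-polytopes of bounded volume. The extra bookkeeping you supply (finitely many lattice projections of a fixed $P$ onto $Q$, hence finitely many lift-equivalence classes per unimodular class) is a point the paper leaves implicit, and your initial reduction to tight lifts is harmless but not needed.
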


\begin{proof}
Let $Q\subset\R^{d-1}$ be a lattice $(d-1)$-polytope and let $q\in \Z^{d-1}$ be an interior lattice point of $Q$.
A bound $n$ for the size of a lift $P$ of $Q$ implies a bound $n+1$ for the length of $\pi^{-1}(q) \cap P$. By Lemma~\ref{lemma:volume_non_hollow}, this gives a bound for the volume of $P$. Since there are only finitely many lattice $d$-polytopes with bounded volume (Hensley~\cite[Thm.~3.6]{Hensley}), the result follows.
\end{proof}


\section{The finiteness threshold width in dimension 4}
\label{sec:dim4}

According to Theorem~\ref{thm:one_hollow_Q}, $w^\infty(4)$ 
equals the largest width of a hollow lattice $3$-polytope with 
infinitely many lifts of bounded size. Since $w^\infty(4)\ge 2$ 
is known (Haase and Ziegler~\cite[Proposition 6]{HaaseZiegler} 
showed infinitely many empty $4$-simplices of width two), we only need to 
look at hollow $3$-polytopes of width at least $3$. 
Let us show that there are only five of them, all of width three
(see Lemma~\ref{lemma:w_H(3)} and Figure~\ref{fig:max-hollow}).

We start with the following classification of hollow lattice $3$-polytopes:

\begin{theorem}[~\protect{\cite[Theorem 1.3]{Treutlein}}]
\label{thm:hollow_dim3}
Any hollow lattice $3$-polytope falls exactly under one of the
following categories:
\begin{enumerate}
\item It has width $1$. All polytopes of width $1$ are hollow and
 there are infinitely many of them for each size.

\item It has width $2$ and admits a projection onto the polygon
 $2\Delta_2$. There are infinitely of them, although finitely many
 for each fixed size.

\item It has width $\ge 2$, and does not admit a projection to
 $2\Delta_2$. There are finitely many of them, regardless the
 size. They are all contained in hollow-maximal $3$-polytopes.
\end{enumerate}
\end{theorem}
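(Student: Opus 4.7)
\emph{Proof plan.} The three categories partition hollow lattice $3$-polytopes by their width, so I would dispatch them in turn using the tools of Sections~\ref{sec:threshold-lifts} and~\ref{sec:finitely-hollow-lifts}. For category (1), a polytope $P$ of width one lies strictly between two consecutive lattice hyperplanes $\{\ell=c\}$ and $\{\ell=c+1\}$; any lattice point $p\in P$ satisfies $\ell(p)\in\{c,c+1\}$ and therefore lies on the boundary of $P$, so $P$ is hollow. An infinite family of each size $n\ge 4$ is then produced by slab constructions: for $n=4$ the Reeve tetrahedra $T_r$ themselves; for $n>4$ one enlarges $T_r$ by adding a lattice point $(0,0,n-4)$ (and the induced edges and facets), keeping the slab $\{0\le x_1\le 1\}$ and hence hollowness, while the volume still grows with $r$ so that the examples remain non-equivalent.

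For categories (2) and (3) the key input is the Nill--Ziegler result recalled in Lemma~\ref{lemma:finite-nonprojecting}: all but finitely many hollow $3$-polytopes admit a lattice projection onto a hollow lattice polygon. If $P$ has $\width(P)\ge 2$ and projects onto a hollow polygon $Q$, Theorem~\ref{theorem:same_width} (widths weakly decrease under projection) gives $\width(Q)\ge 2$, and the only hollow lattice polygon of width $>1$ is $2\Delta_2$ (Remark~\ref{rem:known_values}, $w_H(2)=2$), so $Q=2\Delta_2$. Consequently every hollow $3$-polytope of width $\ge 2$ either projects onto $2\Delta_2$---category (2)---or belongs to the Nill--Ziegler finite exceptional list---category (3). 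A full-dimensional lift of a hollow polygon is automatically hollow, since lattice projections are open maps so an interior lattice point of the lift would project to an interior lattice point of the base.

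For the per-size count in (2), since $2\Delta_2$ is a simplex, Corollary~\ref{coro:finite-simplex} gives only finitely many lifts of each fixed size. Infinitely many non-equivalent hollow lifts exist in total: the family $P_h:=\conv\{(0,0,0),(2,0,0),(0,2,0),(1,1,h)\}$ for $h\in\Z_{>0}$ projects onto $2\Delta_2$, has volume growing linearly with $h$ (hence unboundedly many distinct sizes), and by Theorem~\ref{theorem:same_width} has width exactly $2$ for all but finitely many $h$. For (3), containment in a maximal hollow $3$-polytope is automatic, and the classification of Averkov et al.\ (\cite{AverkovWagnerWeismantel, AKW15}) pins the ambient maximal bodies down to a list of twelve.

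The main obstacle is not the width-monotonicity reduction---once $w_H(2)=2$ is in hand, forcing $Q=2\Delta_2$ is clean---but the fact that Nill--Ziegler only delivers finiteness in category (3) \emph{abstractly}. Turning this into an explicit enumeration, which is what the statement morally asserts, requires combining Nill--Ziegler with the Averkov et al.\ list and a case analysis of their subpolytopes to identify those of width $\ge 2$ that do not project to $2\Delta_2$. That combinatorial case check, rather than the abstract finiteness, is where I expect the genuine effort to lie.
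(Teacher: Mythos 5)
The paper does not prove this statement at all: it is imported verbatim from Treutlein and used as a black box, so there is no internal proof to measure yours against. Your reconstruction is essentially correct and is a legitimate alternative route, deriving the trichotomy from ingredients the paper develops or cites independently: the Nill--Ziegler projection theorem (Lemma~\ref{lemma:finite-nonprojecting}), the uniqueness of $2\Delta_2$ among hollow polygons of width $>1$ (Remark~\ref{rem:known_values}), the inequality $\width(\pi(P))\ge\width(P)$ underlying Theorem~\ref{theorem:same_width}, and Corollary~\ref{coro:finite-simplex} for the per-size finiteness in category (2); none of these depends on Treutlein, so there is no circularity. Two points need tightening. First, your slogan ``widths weakly decrease under projection'' is stated backwards --- projection weakly \emph{increases} width --- but the inequality you actually use, $\width(Q)\ge\width(P)\ge 2$, is the correct one, so the logic is unharmed. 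Second, and more substantively, the claim that containment in a maximal hollow $3$-polytope is ``automatic'' for category (3) is not: a hollow $3$-polytope need not lie in any maximal hollow one (most width-one polytopes cannot, since the finitely many maximal bodies have only finitely many subpolytopes). What rescues the claim is that category (3) is closed under passing to hollow supersets: if $P'\supseteq P$ is hollow with $\pi(P')=2\Delta_2$, then $\pi(P)$ is a lattice subpolygon of $2\Delta_2$ of width $\ge 2$, and any proper lattice subpolygon of $2\Delta_2$ omits a vertex and hence has width $\le 1$, so $\pi(P)=2\Delta_2$, contradicting $P$ being in (3). Hence every ascending chain of hollow supersets of a category-(3) polytope stays inside the finite class (3) and must terminate in a maximal hollow polytope. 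With that two-line addition, plus a routine verification that your size-$n$ width-one family really has exactly $n$ lattice points, the proof is complete; the explicit list of the twelve maximal bodies, which you defer to Averkov et al., is not needed for the statement as written.
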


The hollow-maximal $3$-polytopes referred to in part (3) have been
enumerated in~\cite{AverkovWagnerWeismantel,AKW15}. More precisely,
Averkov, Wagner and Weismantel~\cite{AverkovWagnerWeismantel}
classified the hollow lattice $3$-polytopes that are not properly
contained in any convex body without interior lattice points. Then Averkov,
Kr\"umpelmann and Weltge~\cite{AKW15} showed that the maximal lattice
$3$-polytopes in this sense (which they call $\R$-maximal) coincide
with the hollow-maximal lattice $3$-polytopes in our sense (which they call
$\Z$-maximal). It is known that these two notions of maximality for hollow
polytopes do not coincide in dimensions four and
higher~\cite{NillZiegler}.

\begin{theorem}[~\protect{\cite[Theorem 2.2]{AverkovWagnerWeismantel}}
 and~\protect{\cite[Theorem 1]{AKW15}}]
\label{theorem:averkov_et_al}
There are the following $12$ hollow-maximal lattice $3$-polytopes: 
\[
\arraycolsep=4pt
\begin{array}{ccc}
\mathcal{M}_1\left( \begin{array}{cccc}
 0 & 2 & 0 &0 \\ 
 0 & 0 & 3 & 0\\ 
 0 & 0 & 0 & 6\end{array} \right)
&
\mathcal{M}_2\left( \begin{array}{cccc}
 0 & 2 & 0 &0 \\ 
 0 & 0 & 4 & 0\\ 
 0 & 0 & 0 & 4\end{array} \right)
&
\mathcal{M}_3\left( \begin{array}{cccc}
 0 & 3 & 0 &0 \\ 
 0 & 0 & 3 & 0\\ 
 0 & 0 & 0 & 3\end{array} \right)
\\
\\

\mathcal{M}_4\left( \begin{array}{cccc}
 0 & 1 & 2 &3 \\ 
 0 & 0 & 4 & 0\\ 
 0 & 0 & 0 & 4\end{array} \right)
&
\mathcal{M}_5\left( \begin{array}{cccc}
 0 & 1 & 2 &3 \\ 
 0 & 0 & 5 & 0\\ 
 0 & 0 & 0 & 5\end{array} \right)
&
\mathcal{M}_6\left( \begin{array}{cccc}
 0 & 3 & 1 &2 \\ 
 0 & 0 & 3 & 0\\ 
 0 & 0 & 0 & 3\end{array} \right)
\\
\end{array}
\]
\[
\arraycolsep=4pt
\begin{array}{ccc}

\mathcal{M}_7\left( \begin{array}{cccc}
 0 & 4 & 1 &2 \\ 
 0 & 0 & 2 & 0\\ 
 0 & 0 & 0 & 4\end{array} \right)
&
\mathcal{M}_8\left( \begin{array}{ccccc}
 2 &-2 & 0 & 0& 1\\ 
 0 & 0 & 2 &-2& 1\\ 
 0 & 0 & 0 & 0& 2\end{array} \right)
&
\mathcal{M}_9\left( \begin{array}{ccccc}
-1 & 2 & 0 & 0& 1\\ 
 0 & 0 &-1 & 2& 1\\ 
 0 & 0 & 0 & 0& 3\end{array} \right)
\end{array}
\]
\[
\mathcal{M}_{10}\left( \begin{array}{cccccc}
 1 & 0 &-1 & 2& 1& 0 \\ 
 0 & 1 &-1 & 2& 3& 1\\ 
 0 & 0 & 0 & 3& 3& 3\end{array} \right)
\qquad\qquad
\mathcal{M}_{11}\left( \begin{array}{cccccc}
 1 &-1 & 0 & 2& 0& 1 \\ 
 0 & 0 & 2 & 0& 0& 2\\ 
 0 & 0 & 0 & 2& 2& 2\end{array} \right)
\]
\[
\arraycolsep=4pt
\mathcal{M}_{12}\left( \begin{array}{cccccccc}
0 &-1 & 1 & 0 & 1 & 0 & 2& 1\\ 
0 & 1 & 1 & 2 & 1 & 2 & 2& 3\\ 
0 & 0 & 0 & 0 & 2 & 2 & 2& 2\end{array} \right)
\]
They all have width two except 
$\mathcal{M}_{3}$, $\mathcal{M}_{5}$, $\mathcal{M}_{6}$, $\mathcal{M}_{9}$ and $\mathcal{M}_{10}$,
of width three.
\end{theorem}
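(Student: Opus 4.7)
The plan is to follow the two-stage strategy used in the cited references \cite{AverkovWagnerWeismantel} and \cite{AKW15}. Stage one classifies all lattice hollow $3$-polytopes that are inclusion-maximal among \emph{convex} (not necessarily lattice) hollow bodies in $\R^3$; call this $\R$-maximality. Stage two shows that in dimension three every $\Z$-maximal lattice hollow polytope is in fact $\R$-maximal. Together these two stages yield the stated list of twelve. The width assertion is then a direct computation.

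For stage one, I would start from the fact that $w_H(3)=3$, which is finite by the Kannan--Lov\'asz flatness theorem, so every hollow lattice $3$-polytope has width $1$, $2$, or $3$. No polytope of width $1$ is $\R$-maximal, since it lies in the infinite slab between two consecutive lattice planes and that slab contains arbitrarily large hollow convex bodies. It therefore suffices to enumerate $\R$-maximal examples of width $2$ and $3$, which can be organized by projection type: for width-two polytopes, by the hollow lattice polygon $\pi(P)$ obtained by projecting along an integer functional realizing the width; for width-three polytopes, by direct bounds on the lattice diameter in each coordinate direction, since $w_H(2)=2$ forces width-three polytopes to not project to a hollow polygon. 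For each candidate polytope one verifies hollowness and then checks $\R$-maximality by ruling out the addition of any rational point while preserving hollowness.

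For stage two, the key assertion is: if $P$ is $\Z$-maximal hollow but not $\R$-maximal, then there is a rational point $q\notin P$ with $\conv(P\cup \{q\})$ hollow, and one must then convert $q$ into a \emph{lattice} extension, contradicting $\Z$-maximality. The argument proceeds by studying the cone of admissible extension directions at each boundary point of $P$ and using the low dimension in an essential way: in dimensions four and higher the two notions of maximality differ, as noted in \cite{NillZiegler}. This is the main obstacle of the theorem, since one cannot rely on a purely formal reduction and must exploit three-dimensional features, for instance a careful analysis of the local structure of cones at vertices together with the rationality constraints on supporting hyperplanes in $\R^3$.

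Finally, for the width claim I would compute $\width(\mathcal M_i)$ for $i=1,\ldots,12$ by testing integer functionals. Lemma~\ref{lemma:finitely_many_functionals} bounds the number of relevant $\ell\in(\Z^3)^*$, and each $\mathcal M_i$ has at most eight vertices, so this is a finite verification that $\mathcal M_3,\mathcal M_5,\mathcal M_6,\mathcal M_9,\mathcal M_{10}$ have width three while the remaining seven have width two.
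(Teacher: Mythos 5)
This theorem is not proved in the paper at all: it is imported verbatim from \cite{AverkovWagnerWeismantel} and \cite{AKW15}, and the surrounding text only explains how the two cited results fit together ($\R$-maximality versus $\Z$-maximality). So the relevant comparison is between your sketch and the actual arguments of those references, and there your proposal has genuine gaps. Your two-stage outline does correctly describe the architecture of the cited proofs, but both stages are stated as goals rather than carried out. In stage one, the entire difficulty is producing a \emph{finite, provably complete} list of candidates: saying that width-two polytopes are ``organized by the hollow polygon $\pi(P)$'' and that width-three ones are handled ``by direct bounds on the lattice diameter'' does not explain how to bound the fibers over the projection, nor how to certify that no candidate has been missed; this is where the bulk of the work in \cite{AverkovWagnerWeismantel} lies. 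In stage two, your paragraph restates the claim to be proved (``one must then convert $q$ into a lattice extension'') and appeals to unspecified ``three-dimensional features''; since the statement is genuinely false in dimension four, any proof must contain a concrete mechanism, and none is offered.

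There is also a circularity: you take $w_H(3)=3$ as an input to restrict the enumeration to widths $2$ and $3$, but in this paper (Remark~\ref{rem:known_values}) the value $w_H(3)=3$ is \emph{deduced from} the classification you are trying to prove. The flatness theorem only gives finiteness of $w_H(3)$, with an a priori constant larger than $3$; an honest enumeration would have to cover all widths up to that constant, or else derive the bound $3$ independently. The final width verification for the twelve listed polytopes is fine as a finite computation (Lemma~\ref{lemma:finitely_many_functionals} indeed reduces it to checking finitely many functionals), but that part was never in doubt.
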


In particular, every hollow $3$-polytope has width $\le 3$ and those of width three are contained in one of $\mathcal{M}_{3}$, $\mathcal{M}_{5}$, $\mathcal{M}_{6}$, $\mathcal{M}_{9}$ and $\mathcal{M}_{10}$. 
These five polytopes are pictured in Figure~\ref{fig:max-hollow}, taken from~\cite{AKW15}. (The coordinate system in the figure is not the same as in the definition)

\begin{figure}[htb]
\includegraphics[scale=.9]{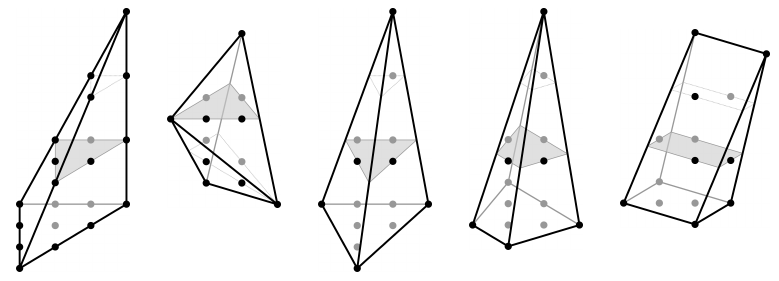}
\centerline{$
\mathcal{M}_{3} \qquad\qquad\quad
\mathcal{M}_{5} \qquad\qquad\quad
\mathcal{M}_{6} \qquad\qquad\quad
\mathcal{M}_{9} \qquad\qquad\quad
\mathcal{M}_{10}
$}
\caption{The five hollow $3$-polytopes of width three. This picture has been taken from Averkov et al~\cite{AKW15}.}
\label{fig:max-hollow}
\end{figure}

A priori there could be proper subpolytopes of one of these five that still have width three, but it is not difficult to prove that this is not the case:

\begin{lemma}
\label{lemma:w_H(3)}
The only lattice hollow $3$-polytopes of width $> 2$ are $\mathcal{M}_{3}$, $\mathcal{M}_{5}$, $\mathcal{M}_{6}$, $\mathcal{M}_{9}$ and $\mathcal{M}_{10}$, and they have width three.
\end{lemma}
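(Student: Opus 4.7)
The plan is to deduce the lemma in two stages: a reduction via Treutlein's theorem to the five maximal hollow width-three polytopes, and then a finite case-by-case verification that no proper lattice subpolytope of any of them has width larger than two.

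For the reduction, let $P$ be any hollow lattice $3$-polytope with $\width(P) > 2$. Treutlein's theorem splits hollow $3$-polytopes into three categories: width $1$; width $\ge 2$ with a projection onto $2\Delta_2$; and width $\ge 2$ with no such projection. Categories (1) and (2) have width at most $2$ (the polygon $2\Delta_2$ has width $2$ and projections do not increase width), so $P$ must fall in category (3). Consequently $P$ is contained in one of the twelve maximal hollow $3$-polytopes of Theorem~\ref{theorem:averkov_et_al}, and since width is monotone under inclusion, $P \subseteq \mathcal{M}_i$ for some $i \in \{3,5,6,9,10\}$.

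It then remains to show that for each such $i$, every proper lattice subpolytope $P \subsetneq \mathcal{M}_i$ satisfies $\width(P) \le 2$. The key observation is that the vertices of any such $P$ are themselves lattice points of $\mathcal{M}_i$; if $P \subsetneq \mathcal{M}_i$ then some vertex $v$ of $\mathcal{M}_i$ must fail to lie in $P$ (otherwise, by convexity, $\mathcal{M}_i \subseteq P$). Hence
\[ P \;\subseteq\; \mathcal{M}_i^{(v)} \;:=\; \conv\!\bigl( (\mathcal{M}_i \cap \Z^3) \setminus \{v\} \bigr), \]
and it suffices to verify $\width(\mathcal{M}_i^{(v)}) \le 2$ for every pair $(i,v)$ with $i \in \{3,5,6,9,10\}$ and $v$ a vertex of $\mathcal{M}_i$.

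This is a finite check involving at most $4+4+4+5+6 = 23$ pairs, further reduced by the automorphisms of each polytope (e.g.~the $S_4$-symmetry of $\mathcal{M}_3 = 3\Delta_3$ collapses its four vertices into one case, and $\mathcal{M}_9$ has a $\Z/2$-symmetry exchanging the vertices $(-1,0,0) \leftrightarrow (0,-1,0)$ and $(2,0,0) \leftrightarrow (0,2,0)$). For each remaining case, the goal is to exhibit an integer functional $\ell$ under which $v$ is the unique extremum among the lattice points of $\mathcal{M}_i$ and for which the $\ell$-width of $\mathcal{M}_i^{(v)}$ is at most $2$. As a sample, for $\mathcal{M}_3$ and $v = (3,0,0)$ one takes $\ell = x_1$; on $\mathcal{M}_3^{(v)}$ this gives $\min \ell = 0$ and $\max \ell = 2$ (attained at $(2,1,0)$ and $(2,0,1)$). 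The main obstacle is the tedious inspection required for the less symmetric cases, especially the six-vertex polytope $\mathcal{M}_{10}$, where one must go through each vertex and guess a suitable functional; as the remark following this lemma notes, this step is most efficiently automated, but it is feasible by hand using the explicit vertex coordinates provided by Theorem~\ref{theorem:averkov_et_al}.
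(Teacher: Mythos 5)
Your proposal is correct and follows essentially the same route as the paper: reduce via Treutlein's theorem and the Averkov et al.\ classification to the five maximal hollow $3$-polytopes of width three, observe that any proper lattice subpolytope misses some vertex and is therefore contained in one of the polytopes $\conv\bigl((\mathcal{M}_i\cap\Z^3)\setminus\{v\}\bigr)$, and verify by a finite check that each of these has width at most two. The paper carries out that last check via explicit projections (and a computer enumeration) rather than listing functionals case by case, but the argument is the same.
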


\begin{proof}
It suffices to check that all the subpolytopes of $\mathcal{M}_{3}$, $\mathcal{M}_{5}$, $\mathcal{M}_{6}$, $\mathcal{M}_{9}$ and $\mathcal{M}_{10}$ obtained by removing a single vertex have width two (or lower). For this, in turn, it suffices to find for each of the five polytopes and each vertex of it, an integer affine functional having value $3$ on that vertex and values $0$, $1$ or $2$ in all the others. Such functionals are specified in the following matrices
$\mathcal{F}_3$, $\mathcal{F}_5$,
$\mathcal{F}_6$, $\mathcal{F}_9$, and $\mathcal{F}_{10}$,
where the $i$-th row of matrix $\mathcal{F}_j$ is the functional corresponding to the vertex that is the $i$-th column of the matrix $\mathcal{M}_j$ from Theorem~\ref{theorem:averkov_et_al}. A row $(a\ b\ c\ |\ d)$ represents the functional $(x,y,z)\mapsto ax+by+cz+d$:
\[
\arraycolsep=4pt
\mathcal{F}_3\left( \begin{array}{ccc|c}
-1 &-1 &-1 & 3 \\ 
 1 & 0 & 0 & 0 \\ 
 0 & 1 & 0 & 0\\ 
 0 & 0 & 1 & 0\end{array} \right)
\quad
\mathcal{F}_5\left( \begin{array}{ccc|c}
-1 & 0 & 0 & 3 \\
 2 & -1 & -1 & 1\\ 
-2 &  1 &  1 & 2\\
1 & 0 & 0 & 0 
\end{array} \right)
\quad
\mathcal{F}_6\left( \begin{array}{ccc|c}
-1 & 0 & 0 &3 \\ 
 1 & 0 & 0 &0 \\ 
 0 & 1 & 0 & 0\\ 
 0 & 0 & 1 & 0\end{array} \right)
\]
\[
\arraycolsep=4pt
\mathcal{F}_9\left( \begin{array}{ccc|c}
-1 & 0 & 0 &2 \\ 
 1 & 0 & 0 &1 \\ 
 0 & -1 & 0 & 2\\ 
 0 & 1 & 0 & 1\\ 
 0 & 0 & 1 & 0\end{array} \right)
\qquad
\mathcal{F}_{10}\left( \begin{array}{ccc|c}
 1 &-1 & 0 & 2\\ 
 0 & 1 & -1 & 2\\
-1 & 0 & 0 &2 \\ 
 1 & 0 & 0 &1 \\ 
-1 & 1 & 0 & 1\\ 
 0 &-1 & 1 & 1\end{array} \right)
\]

In the two that are perhaps less obvious, $\mathcal{M}_{5}$ and $\mathcal{M}_{10}$, the (linear parts of) the functionals come in pairs of opposite ones. Figure~\ref{fig:sub-hollow} shows projections along which these functionals are coordinates (one picture, with two coordinate functionals, for $\mathcal{M}_{5}$, three pictures with the horizontal coordinate in each picture as one of the functionals, for $\mathcal{M}_{10}$).
\begin{figure}[h]
\includegraphics[scale=.48]{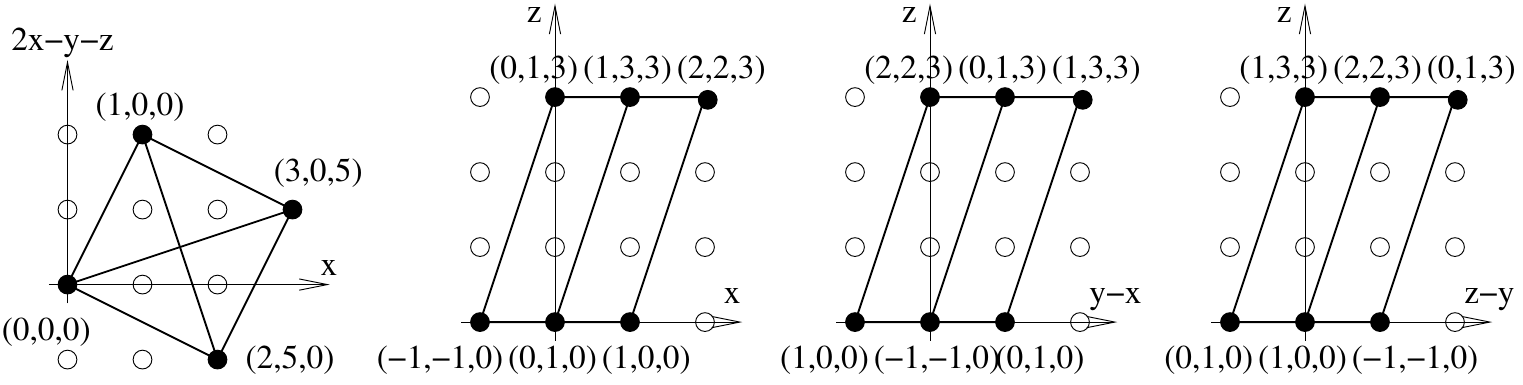}
\centerline{
$
\mathcal{M}_{5} 
\qquad\qquad\quad
\qquad\qquad\quad
\qquad\qquad\quad
\qquad
\mathcal{M}_{10}
\qquad\quad
\qquad\qquad\quad
$
}
\caption{Projections showing that all proper subpolytopes of $\mathcal{M}_{5}$ (left picture) and $\mathcal{M}_{10}$ (three right pictures) have width $<3$.}
\label{fig:sub-hollow}
\end{figure}
\end{proof}

\begin{remark}
\label{rem.data}
As a double-check we have enumerated, using Polymake~\cite{polymake}, all subpolytopes of $\mathcal{M}_1,\ldots,\mathcal{M}_{12}$ of width
$\geq 2$, ordered by size. Our width algorithm is included in releases of Polymake starting with
version 3.0 as a property of a polytope with command \texttt{LATTICE\_WIDTH}.
The lists of the subpolytopes and the algorithms we used to compute them can be found on \url{http://ehrhart.math.fu-berlin.de/Research/Data/}, and also as ancillary files to \href{http://arxiv.org/abs/1607.00798v3}{\tt arXiv:1607.00798v3}.
\end{remark}

\begin{corollary}[Finiteness Threshold Width in dimension $4$]
\label{coro:main}
$w^\infty(4)=2$. 
That is, for each $n\ge 5$, there exist only finitely many lattice $4$-polytopes of size $n$ and width larger than two.
\end{corollary}

\begin{proof}
That $w^\infty(4)\ge2$ follows from Example~\ref{exm:threshold_via_lifts}. 

In the light of Theorem~\ref{thm:one_hollow_Q}, in order to prove $w^\infty(4)\le2$ we only need to check that no hollow $3$-polytope of width larger than two has infinitely many lifts of bounded size.
Lemma~\ref{lemma:w_H(3)} tells us that there are only five polytopes to check, depicted in Figure~\ref{fig:max-hollow}.
$\mathcal{M}_{3}$, $\mathcal{M}_{5}$ and $\mathcal{M}_{6}$ are simplices and hence have only finitely many lifts of bounded size by Corollary~\ref{coro:finite_simplex}. 
That $\mathcal{M}_{9}$ and $\mathcal{M}_{10}$ have only finitely many lifts of bounded size is proved in Propositions~\ref{prop:M_9} and~\ref{prop:M_10} below. 
\end{proof}

\begin{proposition}
\label{prop:M_9}
The pyramid $\mathcal{M}_{9}$ has finitely many lifts of bounded size.
\end{proposition}

\begin{proof}
The base of the pyramid is a quadrilateral with three (relative) interior points. 
This quadrilateral has a finite number of
lifts of bounded size by Corollary~\ref{coro:finite-polytope-interiorpoint},
and the whole pyramid by Lemma~\ref{lemma:finite-pyramid}.
\end{proof}

\begin{proposition}
\label{prop:M_10}
The prism $\mathcal{M}_{10}$ has finitely many lifts of bounded size.
\end{proposition}

\begin{proof}
Let $u,v,w, u', v'$ and $w'$ be the vertices of the prism, where
$uu',vv',ww'$ are edges.
Let $Q:=\conv\{u,v,w,u',v'\}\subset \mathcal{M}_{10}$. It is a
quadrangular pyramid over a polygon with interior points.

Any tight lift of $\mathcal{M}_{10}$ will be of the form $P(\tilde
Q,\tilde w ')=\conv(\tilde Q \cup \{\tilde w'\}) $, where $\tilde Q$ is a tight lift of $Q$ and
$\tilde w '$ is a point in the fiber of $w'$. By
Lemma~\ref{lemma:finite-pyramid}
and Corollary~\ref{coro:finite-polytope-interiorpoint}, there are only finitely many
such $\tilde Q$ of bounded size. Fix one, and let us see that there
are only finitely many possibilities for $\tilde w '$.

Each lift $\tilde w '$ (together with the fixed tight lift $\tilde Q$)
induces a lift of the quadrilateral $R:=\conv\{u,
w, u', w'\}$. We claim that at most two choices of ${\tilde w'}$ correspond to equivalent lifts of $R$.

By fixing $\tilde Q$ we already have fixed a lift of the three
vertices $u, w, u'$. These three lifts are contained in a plane
$\Pi$. On the other hand, the possible lifts of the point $w'$ are in
the line $\pi^{-1}(w')$.
This line is not contained in $\Pi$, so these tight lifts of $R$
are all $3$-dimensional (except for at most one lift of $\tilde w'$), and their
volume is proportional to the distance between ${\tilde w'}$ and
$\Pi$. That is, each of the possibilities for ${\tilde w'}$ induces
non-equivalent tight lifts of the quadrilateral $R$, up to (perhaps)
reflection with respect to the plane $\Pi$.

Now, as the quadrilateral $R$ contains interior points, Corollary~\ref{coro:finite-polytope-interiorpoint}
implies that it has only finitely many lifts of bounded size. Infinitely many choices of $\tilde w'$ would then have unbounded size, 
and so would happen for $P(\tilde Q,\tilde w')$. That is, $\mathcal{M}_{10}$ has only finitely many tight lifts of bounded size, and Corollary~\ref{coro:equiv_statements} implies the statement.
\end{proof}

\bibliographystyle{amsalpha}

\bibliography{finiteness}

\providecommand{\bysame}{\leavevmode\hbox to3em{\hrulefill}\thinspace}
\providecommand{\MR}{\relax\ifhmode\unskip\space\fi MR }
\providecommand{\MRhref}[2]{%
  \href{http://www.ams.org/mathscinet-getitem?mr=#1}{#2}
}
\providecommand{\href}[2]{#2}
\begin{thebibliography}{{{\O}}b07}

\bibitem[AJP14]{AJP14}
Benjamin Assarf, Michael Joswig, and Andreas Paffenholz, \emph{Smooth {F}ano
  polytopes with many vertices}, Discrete Comput. Geom. \textbf{52} (2014),
  no.~2, 153--194. \MR{3249378}

\bibitem[AKW17]{AKW15}
Gennadiy Averkov, Jan Kr\"umpelmann, and Stefan Weltge, \emph{Notions of
  maximality for integral lattice-free polyhedra: the case of dimension three},
  Math. Oper. Res. \textbf{42} (2017), no.~4, 1035--1062.

\bibitem[AWW11]{AverkovWagnerWeismantel}
Gennadiy {Averkov}, Christian {Wagner}, and Robert {Weismantel}, \emph{{Maximal
  lattice-free polyhedra: finiteness and an explicit description in dimension
  three}}, {Math. Oper. Res.} \textbf{36} (2011), no.~4, 721--742.

\bibitem[BBBK11]{BarileBernardiBorisovKantor}
Margherita {Barile}, Dominique {Bernardi}, Alexander {Borisov}, and Jean-Michel
  {Kantor}, \emph{{On empty lattice simplices in dimension $4$}}, {Proc. Am.
  Math. Soc.} \textbf{139} (2011), no.~12, 4247--4253.

\bibitem[BLPS99]{Banaszczyk_etal}
Wojciech {Banaszczyk}, Alexander~E. {Litvak}, Alain {Pajor}, and Stanislaw~J.
  {Szarek}, \emph{The flatness theorem for nonsymmetric convex bodies via the
  local theory of {B}anach spaces}, {Math. Oper. Res.} \textbf{24} (1999),
  no.~3, 728--750.

\bibitem[Bob09]{Bober09}
Jonathan~W. Bober, \emph{Factorial ratios, hypergeometric series, and a family
  of step functions}, J. Lond. Math. Soc. (2) \textbf{79} (2009), no.~2,
  422--444. \MR{2496522}

\bibitem[BS16a]{5points}
M\'onica Blanco and Francisco Santos, \emph{Lattice 3-polytopes with few
  lattice points}, {SIAM} J. Discrete Math. \textbf{30} (2016), no.~2,
  669--686.

\bibitem[BS16b]{6points}
\bysame, \emph{Lattice 3-polytopes with six lattice points}, {SIAM} J. Discrete
  Math. \textbf{30} (2016), no.~2, 687--717.

\bibitem[BS17]{quasiminimals}
\bysame, \emph{Enumeration of lattice 3-polytopes by their number of lattice
  points}, Discrete Comput.~Geom. \textbf{60} (2017), no.~3, 756--800.

\bibitem[GJ00]{polymake}
Ewgenij Gawrilow and Michael Joswig, \emph{polymake: a framework for analyzing
  convex polytopes}, Polytopes --- Combinatorics and Computation (Gil Kalai and
  G\"unter~M. Ziegler, eds.), Birkh\"auser, 2000, pp.~43--74.

\bibitem[{Hen}83]{Hensley}
Douglas {Hensley}, \emph{{Lattice vertex polytopes with interior lattice
  points}}, {Pac. J. Math.} \textbf{105} (1983), 183--191.

\bibitem[HZ00]{HaaseZiegler}
Christian {Haase} and G\"unter~M. {Ziegler}, \emph{{On the maximal width of
  empty lattice simplices}}, {Eur. J. Comb.} \textbf{21} (2000), no.~1,
  111--119, art. no. eujc.1999.0325.

\bibitem[IS19]{IglesiasSantos}
\'Oscar {Iglesias-Vali\~{n}o} and Francisco Santos, \emph{Enumeration of empty
  lattice $4$-simplices of width larger than two}, Trans. Am. Math. Soc.
  \textbf{371} (2019), no.~9, 6605--6625.

\bibitem[IS20]{IglesiasSantos2}
\bysame, \emph{The complete classification of empty lattice $4$-simplices},
  Rev. Mat. Iberoam. \textbf{{\rm accepted}} (2020+).

\bibitem[Kas10]{Kas08}
Alexander~M. Kasprzyk, \emph{Canonical toric {F}ano threefolds}, Canadian
  J.~Math. \textbf{62} (2010), no.~6, 1293--1309. \MR{MR2760660}

\bibitem[KL88]{KL88}
Ravi Kannan and L{\'a}szl{\'o} Lov{\'a}sz, \emph{Covering minima and
  lattice-point-free convex bodies}, Ann. of Math. (2) \textbf{128} (1988),
  no.~3, 577--602. \MR{970611}

\bibitem[KS00]{KS00}
Maximilian Kreuzer and Harald Skarke, \emph{Complete classification of
  reflexive polyhedra in four dimensions}, Adv. Theor. Math. Phys. \textbf{4}
  (2000), no.~6, 1209--1230. \MR{1894855}

\bibitem[LN15]{LoNi15}
Benjamin Lorenz and Benjamin Nill, \emph{On smooth {G}orenstein polytopes},
  Tohoku Math. J. (2) \textbf{67} (2015), no.~4, 513--530. \MR{3436539}

\bibitem[LP08]{LoPa}
Benjamin Lorenz and Andreas Paffenholz, \emph{unpublished improvement of
  {{\O}}bro's algorithm},
  https://polymake.org/polytopes/paffenholz/www/fano.html (2008).

\bibitem[LZ91]{LagariasZiegler}
Jeffrey~C. {Lagarias} and G\"unter~M. {Ziegler}, \emph{{Bounds for lattice
  polytopes containing a fixed number of interior points in a sublattice}},
  {Can. J. Math.} \textbf{43} (1991), no.~5, 1022--1035.

\bibitem[MMM88]{MMM88}
Shigefumi Mori, David~R. Morrison, and Ian Morrison, \emph{On four-dimensional
  terminal quotient singularities}, Math. Comput. \textbf{51} (1988), no.~184,
  769--786.

\bibitem[NP11]{NP11}
Benjamin Nill and Andreas Paffenholz, \emph{Examples of {K}\"ahler-{E}instein
  toric {F}ano manifolds associated to non-symmetric reflexive polytopes},
  Beitr. Algebra Geom. \textbf{52} (2011), no.~2, 297--304. \MR{2842630}

\bibitem[NZ11]{NillZiegler}
Benjamin {Nill} and G\"unter~M. {Ziegler}, \emph{{Projecting lattice polytopes
  without interior lattice points}}, {Math. Oper. Res.} \textbf{36} (2011),
  no.~3, 462--467.

\bibitem[{{\O}}b07]{Ob07}
Mikkel {{\O}}bro, \emph{An algorithm for the classification of smooth {F}ano
  polytopes}, \href{http://arXiv.org/abs/0704.0049}{\tt arXiv:0704.0049}
  (unpublished manuscript, 2007).

\bibitem[OSY12]{OSY12}
Hajime Ono, Yuji Sano, and Naoto Yotsutani, \emph{An example of an
  asymptotically {C}how unstable manifold with constant scalar curvature}, Ann.
  Inst. Fourier (Grenoble) \textbf{62} (2012), no.~4, 1265--1287. \MR{3025743}

\bibitem[{Ree}57]{Re57}
John~E. {Reeve}, \emph{{On the volume of lattice polyhedra}}, {Proc. Lond.
  Math. Soc. (3)} \textbf{7} (1957), 378--395.

\bibitem[Rud00]{Rudelson2000}
Mark Rudelson, \emph{Distances between nonsymmetric convex bodies and the
  {M}{M}$^*$-estimate}, Positivity \textbf{4} (2000), 161--178.

\bibitem[San90]{Sankaran90}
Gregory~K. Sankaran, \emph{Stable quintiples and terminal quotient
  singularities}, Math. Proc. Cambridge Philos. Soc. \textbf{107} (1990),
  no.~1, 91--101.

\bibitem[{Sca}85]{Scarf}
Herbert~E. {Scarf}, \emph{{Integral polyhedra in three space}}, {Math. Oper.
  Res.} \textbf{10} (1985), 403--438.

\bibitem[Seb99]{Sebo}
Andr{\'a}s Seb{\H{o}}, \emph{An introduction to empty lattice simplices},
  Integer programming and combinatorial optimization ({G}raz, 1999), Lecture
  Notes in Comput. Sci., vol. 1610, Springer, Berlin, 1999, pp.~400--414.
  \MR{1709397 (2000e:90063)}

\bibitem[Tre08]{Treutlein}
Jaron Treutlein, \emph{$3$-dimensional lattice polytopes without interior
  lattice points}, \href{http://arXiv.org/abs/0809.1787}{\tt arXiv:0809.1787}
  (unpublished manuscript, 2008).

\bibitem[Whi64]{White}
George~K. White, \emph{Lattice tetrahedra}, Canadian Journal of Mathematics
  \textbf{16} (1964), 389--396.

\end{thebibliography}

\end{document}